\newcommand{\mR}{\mathbb{R}}                    
\newcommand{\mC}{\mathbb{C}}                    
\newcommand{\abs}[1]{\lvert #1 \rvert}          
\newcommand{\norm}[1]{\lVert #1 \rVert}         
\newcommand{\br}[1]{\langle #1 \rangle}         
\newcommand{\mSp}{\mathscr{S}^{\prime}}
\newcommand{\mF}{\mathscr{F}}
\newcommand{\id}{\mathrm{Id}}
\theoremstyle{definition}
\newtheorem{thm}{Theorem}[section]
\newtheorem{prop}[thm]{Proposition}
\newtheorem{cor}[thm]{Corollary}
\newtheorem{lemma}[thm]{Lemma}
\newtheorem*{definition}{Definition}
\numberwithin{equation}{section}
\title[Reconstructions on admissible manifolds]{Reconstructions from boundary measurements on admissible manifolds}
\author{Carlos E. Kenig}
\address{Department of Mathematics, University of Chicago}
\email{cek@math.uchicago.edu}
\author{Mikko Salo}
\address{Department of Mathematics and Statistics, University of Helsinki}
\email{mikko.salo@helsinki.fi}
\author{Gunther Uhlmann}
\address{Department of Mathematics, University of Washington and Department of Mathematics, University of California, Irvine}
\email{gunther@math.washington.edu}
\date{November 2, 2010}
\begin{document}

\begin{abstract}
We prove that a potential $q$ can be reconstructed from the Dirichlet-to-Neumann map for the Schr\"odinger operator $-\Delta_g + q$ in a fixed admissible $3$-dimensional Riemannian manifold $(M,g)$. We also show that an admissible metric $g$ in a fixed conformal class can be constructed from the Dirichlet-to-Neumann map for $\Delta_g$. This is a constructive version of earlier uniqueness results by Dos Santos Ferreira et al. \cite{DKSaU} on admissible manifolds, and extends the reconstruction procedure of Nachman \cite{N} in Euclidean space. The main points are the derivation of a boundary integral equation characterizing the boundary values of complex geometrical optics solutions, and the development of associated layer potentials adapted to a cylindrical geometry.
\end{abstract}

\maketitle


\section{Introduction} \label{section_introduction}

This paper is concerned with the problem of reconstructing material parameters of a medium from boundary measurements. A typical question of this type is Calder\'on's inverse conductivity problem \cite{C}, which consists in recovering the conductivity of a body from voltage to current measurements at the boundary. For bounded domains in Euclidean space in dimensions $n \geq 3$, it was proved in \cite{SU} that a smooth positive scalar conductivity $\sigma$ is uniquely determined by the Dirichlet-to-Neumann map (DN map) $\Lambda_{\sigma}$ representing the boundary measurements. This uniqueness result was then extended to a reconstruction procedure in \cite{N} and independently in \cite{No_1988}, see also \cite{HN}. In two dimensions, uniqueness and reconstruction for this problem was proved even for bounded measurable conductivities in \cite{AP_boundary}, \cite{AP}.

In this paper we consider Calder\'on's inverse problem and related questions in anisotropic media, where the conductivity depends on direction. This corresponds to replacing the scalar conductivity $\sigma$ by a smooth symmetric positive definite matrix. The question then is to recover the matrix $\sigma$ from the DN map $\Lambda_{\sigma}$, up to the natural obstruction given by diffeomorphisms which fix the boundary. If $n=2$, it is proved in \cite{ALP} that any bounded measurable matrix conductivity $\sigma$ is determined by $\Lambda_{\sigma}$ up to diffeomorphism. For constructive results see \cite{HS} and the references therein.

In three and higher dimensions the anisotropic Calder{\'o}n problem is open even for smooth matrix conductivities. We refer to \cite{DKSaU} for a more thorough discussion and references to known results. It was observed in \cite{LeU}  that the anisotropic Calder\'on problem is closely related to certain inverse problems for the Laplace-Beltrami operator on a Riemannian manifold, which we set out to define.

{\bf Statement of main results.} Let $(M,g)$ be a compact oriented Riemannian manifold with $C^{\infty}$ boundary, and let $\Delta_g$ be the Laplace-Beltrami operator. In local coordinates 
$$
\Delta_g u = \abs{g}^{-1/2} \sum_{j,k=1}^n \frac{\partial}{\partial x_j} \left( \abs{g}^{1/2} g^{jk} \frac{\partial u}{\partial x_k} \right)
$$
where $g = (g_{jk})$ is the metric in local coordinates, $(g^{jk})$ is the inverse matrix of $(g_{jk})$, and $\abs{g} = \det(g_{jk})$. Consider the Dirichlet problem 
\begin{equation*}
\left\{ \begin{array}{rll}
\Delta_g u &\!\!\!= 0 & \quad \text{in } M, \\
u &\!\!\!= f & \quad \text{on } \partial M.
\end{array} \right.
\end{equation*}
For any $f \in H^{3/2}(\partial M)$ there is a unique solution $u \in H^2(M)$, and the DN map is defined by 
$$
\Lambda_g: H^{3/2}(\partial M) \to H^{1/2}(\partial M), \ \ f \mapsto \partial_{\nu} u|_{\partial M}
$$
where the normal derivative is given by 
$$
\partial_{\nu} u|_{\partial M} = \sum_{j,k=1}^n g^{jk} \frac{\partial u}{\partial x_j} \nu_k.
$$
Here $\nu_k = \sum_{k,l=1}^n g_{kl} \nu^l$, and $(\nu^1,\ldots,\nu^n)$ is the coordinate expression for the unit outer normal vector $\nu$ on $\partial M$.

Our first result states that the map $\Lambda_g$ constructively determines $g$ within a known conformal class of admissible metrics (as defined below).

\begin{thm} \label{thm_main1}
Let $(M,g)$ be a given admissible $3$-dimensional manifold. If $c$ is a smooth positive function on $M$, then from the knowledge of $\Lambda_{cg}$ one can constructively determine $c$.
\end{thm}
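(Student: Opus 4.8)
The plan is to reduce Theorem \ref{thm_main1} to a statement about recovering a potential from the DN map for a Schrödinger operator, and then invoke (once it is established in the body of the paper) the reconstruction procedure for $-\Delta_g + q$ on admissible manifolds. The starting point is the standard conformal identity in dimension $n=3$: if $u$ solves $\Delta_{cg} u = 0$, then $v = c^{(n-2)/4} u = c^{1/4} u$ solves $(-\Delta_g + q_c) v = 0$, where $q_c = c^{(n+2)/4}\Delta_g(c^{-(n-2)/4}) = c^{5/4}\Delta_g(c^{-1/4})$. Thus the DN map $\Lambda_{cg}$ and the DN map $\Lambda_{g,q_c}$ for the Schrödinger equation are related by an explicit conjugation involving only the \emph{boundary} values of $c$ and $\partial_\nu c$. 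Consequently the first step is to show that the Cauchy data of $c$ on $\partial M$ — that is, $c|_{\partial M}$ and $\partial_\nu c|_{\partial M}$, and in fact all boundary derivatives of $c$ — are determined by $\Lambda_{cg}$.

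The boundary determination step is carried out by a standard localized high-frequency test-function argument: one constructs approximate solutions of $\Delta_{cg} u = 0$ that concentrate near a boundary point $x_0$ and oscillate rapidly in the tangential directions, and inspects the leading asymptotics of $\br{\Lambda_{cg} f, f}$; the principal symbol of $\Lambda_{cg}$ recovers $c(x_0)$ up to the known value of $g$ at $x_0$, and successively lower-order terms in the symbol expansion recover the jet of $c$ at $x_0$. This is essentially the argument of Lee--Uhlmann and Kohn--Vogelius adapted to the conformal setting; since $g$ is known, no diffeomorphism ambiguity arises and one genuinely recovers $c$ and all its normal derivatives on $\partial M$. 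I would state this as a lemma and either give the short computation with the principal symbol or cite the relevant boundary-determination results, since it is not the novel part of the paper.

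With the boundary jet of $c$ in hand, the conjugation relating $\Lambda_{cg}$ to $\Lambda_{g,q_c}$ becomes fully explicit and invertible, so $\Lambda_{g,q_c}$ is determined by $\Lambda_{cg}$. Then one applies the reconstruction theorem for the Schrödinger operator on admissible manifolds — the main result of this paper, whose proof occupies the bulk of the text via the boundary integral equation for complex geometrical optics solutions and the cylindrical layer potentials — to recover $q_c$ from $\Lambda_{g,q_c}$. It remains to pass from $q_c$ back to $c$. Recalling $q_c = c^{5/4}\Delta_g(c^{-1/4})$, set $w = c^{-1/4}$; then $w$ solves the linear elliptic equation $\Delta_g w - q_c\, w^{-5}\cdot\text{(nothing)}$ — more precisely one rewrites the relation as $\Delta_g(c^{-1/4}) = q_c\, c^{-5/4} = q_c\, w^{5}$, which is genuinely nonlinear in $w$; the clean way is to observe that $c^{(n-2)/4} = c^{1/4}$ is, by the conformal identity, exactly the solution transform, and that $v_0 := c^{1/4}$ solves $(-\Delta_g + q_c) v_0 = 0$ in $M$ with $v_0|_{\partial M} = (c|_{\partial M})^{1/4}$ known from the boundary determination. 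So once $q_c$ is known one solves this \emph{linear} Dirichlet problem for the now-known operator $-\Delta_g + q_c$ with the known boundary data to obtain $v_0$, and then sets $c = v_0^{4}$. (One must check $0$ is not a Dirichlet eigenvalue of $-\Delta_g + q_c$, which holds because $\Lambda_{g,q_c}$ exists, i.e. the Dirichlet problem for $\Delta_{cg}$ is well posed.)

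The step I expect to be the genuine obstacle is of course not in this reduction at all but in the Schrödinger reconstruction result it invokes: namely, deriving the boundary integral equation that characterizes the boundary traces of the complex geometrical optics solutions on the admissible manifold, and constructing layer potentials compatible with the product/cylindrical structure $(M,g) \subset (\mathbb{R}\times M_0, e \oplus g_0)$ that make that integral equation solvable. Within the argument above, the only delicate bookkeeping is making the conjugation between $\Lambda_{cg}$ and $\Lambda_{g,q_c}$ and the final solve for $v_0$ fully constructive — i.e. checking that every object used (the boundary jet of $c$, the operator $-\Delta_g+q_c$, the forward solution map) is explicitly obtained from $\Lambda_{cg}$ — but this is routine once the boundary determination lemma and the Schrödinger reconstruction are granted.
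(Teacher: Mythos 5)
Your reduction is the same as the paper's: boundary determination of $c|_{\partial M}$ and $\partial_{\nu}c|_{\partial M}$, the conformal conjugation identity turning $\Lambda_{cg}$ into $\Lambda_{g,\tilde{q}}$, and an appeal to Theorem \ref{thm_main2} to recover the potential. The last step, however, is genuinely different and is correct. The paper sets $w = \log c^{-\frac{n-2}{4}}$ and recovers $c$ by solving the \emph{nonlinear} Dirichlet problem $-\Delta_{g} w + \langle dw,dw\rangle_{g} = \tilde{q}$ with known boundary data, invoking the maximum principle for uniqueness. You instead observe that $v_0 = c^{\frac{n-2}{4}}$ is the image of the constant $\Delta_{cg}$-harmonic function $1$ under the conformal solution transform, hence lies in the kernel of $-\Delta_{g}+\tilde{q}$, and you recover it by solving a \emph{linear} Dirichlet problem with the now-known operator and known boundary value $c^{\frac{n-2}{4}}|_{\partial M}$; your justification that $0$ is not a Dirichlet eigenvalue of $-\Delta_g+\tilde{q}$ (via conjugation back to $\Delta_{cg}$, whose Dirichlet problem is well posed) is valid, so the linear solve returns exactly $c^{\frac{n-2}{4}}$ and hence $c$. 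This is arguably more elementary than the paper's route, at the cost of nothing; the nonlinear formulation buys a uniqueness statement independent of the spectral condition, but that condition holds here anyway. One small slip: your displayed formula $q_c = c^{\frac{n+2}{4}}\Delta_g(c^{-\frac{n-2}{4}})$ is not quite the right potential — the correct one is $\tilde{q} = -c^{\frac{n+2}{4}}\Delta_{cg}(c^{-\frac{n-2}{4}}) = c^{-\frac{n-2}{4}}\Delta_{g}(c^{\frac{n-2}{4}})$ (note the conformal Laplacian and the sign) — but this does not affect your argument, since the only property you actually use, namely $(-\Delta_g+\tilde{q})c^{\frac{n-2}{4}}=0$, holds for the correct potential, and you only need $c$ and $\partial_\nu c$ on $\partial M$ (not the full jet) to make the conjugation of DN maps explicit.
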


The next question concerns an inverse problem for the Schr\"odinger equation in $(M,g)$. If $q$ is a smooth function on $M$, we consider the Dirichlet problem 
\begin{equation*}
\left\{ \begin{array}{rll}
(-\Delta_g + q) u &\!\!\!= 0 & \quad \text{in } M, \\
u &\!\!\!= f & \quad \text{on } \partial M.
\end{array} \right.
\end{equation*}
We make the standing assumption that 
$$
\text{$0$ is not a Dirichlet eigenvalue of $-\Delta_g + q$ in $M$.}
$$
This means that for any $f \in H^{3/2}(\partial M)$ the equation has a unique solution $u \in H^2(M)$, and the DN map can be defined by 
$$
\Lambda_{g,q}: H^{3/2}(\partial M) \to H^{1/2}(\partial M), \ \ f \mapsto \partial_{\nu} u|_{\partial M}.
$$
The second main result is as follows.

\begin{thm} \label{thm_main2}
Let $(M,g)$ be a given admissible $3$-dimensional manifold. If $q$ is a smooth function on $M$, then from the knowledge of $\Lambda_{g,q}$ one can constructively determine $q$.
\end{thm}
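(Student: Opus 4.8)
The plan is to transplant Nachman's reconstruction scheme \cite{N} to the cylinder $\mathbb{R}\times M_0$, using the complex geometrical optics (CGO) solutions of \cite{DKSaU} as the building blocks. \emph{Reduction to a product.} Since $(M,g)$ is admissible and $g$ is \emph{known}, there are explicit coordinates realizing $M$ as a relatively compact submanifold of a cylinder $T = \mathbb{R}\times M_0$, with $(M_0,g_0)$ a fixed simple surface and $g = c\,\bar g$, $\bar g = e\oplus g_0$, for a known positive function $c$. The standard conformal transformation law turns $(-\Delta_g + q)u = 0$ into $(-\Delta_{\bar g} + \bar q)\bar u = 0$, where for $n=3$ one has $\bar u = c^{1/4}u$ and $\bar q = c\,q$ plus a term determined by $c$ and $g$; moreover $\bar q$, $c$ and the relation between $\Lambda_{g,q}$ and $\Lambda_{\bar g,\bar q}$ are all explicitly computable from the given data. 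Hence it suffices to reconstruct $\bar q$ on $M$ from $\Lambda_{\bar g,\bar q}$. I also fix an open simple surface $\widehat{M}_0 \supset \overline{M}_0$, so that auxiliary computations can be carried out on the larger cylinder $\widehat{T} = \mathbb{R}\times\widehat{M}_0 \supset M$.

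\emph{CGO solutions.} Following \cite{DKSaU}, fix a real-valued function $\varphi$ on $M_0$ for which $\rho(x_1,x') = x_1 + i\varphi(x')$ is a limiting Carleman weight for $\bar g$ (so that the zero-order $s^2$ term below cancels for this admissible choice). Conjugation gives $L_s := e^{-s\rho}(-\Delta_{\bar g})e^{s\rho} = -\Delta_{\bar g} - 2s\big(\partial_{x_1} + i\langle d\varphi, d\,\cdot\,\rangle_{g_0}\big)$ up to lower order. There are solutions $u = e^{s\rho}(a + r_s)$ of $(-\Delta_{\bar g} + \bar q)u = 0$ in $M$, where $a = a(x')$ is a prescribed, explicitly constructible amplitude solving the first transport equation, and $\norm{r_s}_{L^2(M)} = O(\abs{s}^{-1})$ as $s = \tau + i\lambda$ with $\abs{\tau}\to\infty$, $\lambda$ fixed; for $\abs{\tau}$ large the correction $r_s$, hence $u$, is unique, by the Carleman estimate. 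In \cite{DKSaU} the potential is extracted from the integral identity
\[
\int_{\partial M}\big(\Lambda_{\bar g,\bar q} - \Lambda_{\bar g,0}\big)\big(u_1|_{\partial M}\big)\,u_2\,dS = \int_M \bar q\,u_1 u_2\,dV_{\bar g},
\]
with $u_1$ a CGO solution for $\bar q$ and $u_2$ a CGO solution for the zero potential, built from $\rho$ and $-\bar\rho$ and with complex parameters chosen so that the product of the leading amplitudes carries the factor $e^{i\lambda x_1}$. The whole point is that the right-hand side involves the interior solution $u_1$; the work is to show that the boundary trace $u_1|_{\partial M}$ is itself determined by $\Lambda_{\bar g,\bar q}$.

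\emph{The boundary integral equation (the heart of the matter).} For this I would construct a Faddeev-type fundamental solution $G_s$ for $L_s$ on $\widehat{T}$. Taking the Fourier transform in $x_1$ reduces $L_s$ to a $\xi_1$-family of elliptic operators on $\widehat{M}_0$, namely $-\Delta_{g_0} + \xi_1^2 - 2is\big(\xi_1 + \langle d\varphi, d\,\cdot\,\rangle_{g_0}\big)$ up to lower order; inverting these via a resolvent/fundamental solution on the open surface $\widehat{M}_0$ and transforming back in $x_1$ — choosing the branch whose behaviour as $x_1\to\pm\infty$ is dictated by $\re s$ — yields $G_s$ with $L_s G_s = \delta$ and mapping properties between weighted Sobolev spaces on $\widehat{T}$ whose norms grow at most polynomially in $s$. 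With $G_s$ in hand, set $\mathcal{S}_s h(x) = \int_{\partial M} G_s(x,y)\,h(y)\,dS(y)$; then Green's identity applied to $u = e^{s\rho}(a+r_s)$, together with $\partial_{\nu} u|_{\partial M} = \Lambda_{\bar g,\bar q}(u|_{\partial M})$ and the analogous identity for the zero potential, produces an equation for $f_s := (e^{-s\rho}u)|_{\partial M}$ of the form
\[
f_s = a|_{\partial M} - e^{-s\rho}\,\mathcal{S}_s\big[\big(\Lambda_{\bar g,\bar q} - \Lambda_{\bar g,0}\big)(e^{s\rho}f_s)\big] \quad\text{on }\partial M,
\]
in which $a$, $\rho$, $G_s$ and $\Lambda_{\bar g,0}$ are all known. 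Since $\Lambda_{\bar g,\bar q} - \Lambda_{\bar g,0}$ and the single-layer trace are both smoothing, this is a second-kind Fredholm equation on $\partial M$, and the main obstacle is to prove it is \emph{uniquely} solvable for $\abs{\tau}$ large. A nontrivial element of the kernel would yield, through $\mathcal{S}_s$, a solution of $(-\Delta_{\bar g} + \bar q)w = 0$ in $M$ and of the homogeneous equation on $\widehat{T}\setminus\overline{M}$ that is too small on $\partial M$ and has the $x_1$-behaviour built into $G_s$; combining interior uniqueness ($0$ is not a Dirichlet eigenvalue), a unique continuation / Carleman argument on the cylinder, and the large-$\abs{s}$ bounds for $G_s$ forces $w\equiv 0$. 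The absence of a genuine exterior domain — Nachman's $\mathbb{R}^n\setminus\overline{\Omega}$ is replaced here by the two ends of the cylinder — is exactly what makes this cylindrical layer-potential analysis the technical core of the theorem.

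\emph{Reconstructing $\bar q$, hence $q$.} Solving the boundary integral equation determines $f_s = (e^{-s\rho}u)|_{\partial M}$, hence the full Cauchy data $\big(u|_{\partial M}, \partial_{\nu}u|_{\partial M}\big)$ of the CGO solution, from $\Lambda_{\bar g,\bar q}$ alone. Feeding a matched pair of such solutions into the integral identity above and letting $\abs{\tau}\to\infty$ with $\lambda$ fixed kills the $r_s$ terms and leaves a computable number equal to $\int_{M_0}\widetilde{\bar q}(\lambda,x')\,b(x')\,dV_{g_0}(x')$, where $\widetilde{\bar q}(\lambda,\cdot)$ is the partial Fourier transform of $\bar q$ in $x_1$ and $b$ is a known weight (a product of amplitudes and conformal factors). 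Varying the amplitudes and the phase $\varphi$, this is precisely the data of the geodesic ray transform on the simple surface $M_0$ (or an attenuated version of it) applied to $\widetilde{\bar q}(\lambda,\cdot)$; since $M_0$ is simple this transform is constructively invertible, so one recovers $\widetilde{\bar q}(\lambda,\cdot)$ for every $\lambda$, and Fourier inversion in $x_1$ reconstructs $\bar q$ on $M$. The known conformal relation between $\bar q$ and $q$ then returns $q$, proving Theorem~\ref{thm_main2}.
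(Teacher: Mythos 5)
Your proposal follows essentially the same route as the paper: conformal reduction to $g = e \oplus g_0$, CGO solutions on the cylinder $T = \mR \times M_0$, a Faddeev-type Green operator built by Fourier transform in $x_1$ together with an eigenfunction expansion on the transversal manifold, a second-kind Fredholm boundary integral equation whose unique solution is $u_1|_{\partial M}$, the integral identity and the limit $\abs{\tau}\to\infty$, and finally inversion of an attenuated geodesic ray transform on the simple surface $M_0$ followed by Fourier inversion in $x_1$. Two remarks on the points you leave as sketches. First, where you invoke ``interior uniqueness plus a unique continuation / Carleman argument on the cylinder'' to rule out kernel elements of the boundary equation, the paper instead fixes the uniqueness class for the CGO remainder concretely --- Dirichlet data on the lateral boundary $\partial T$ together with polynomially weighted behaviour in $x_1$ (the space $H^1_{-\infty,0}(T)$) --- and proves an equivalence chain between the differential equation, an integral equation, an exterior problem, and the boundary integral equation; unique solvability of the boundary equation is thereby reduced to the already-established unique solvability of the conjugated equation in that weighted class. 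The Dirichlet condition on $\partial T$ is what replaces Nachman's decay at infinity, and making this the uniqueness notion (rather than a Carleman estimate on $M$) is what makes the layer potential explicit in terms of the Dirichlet spectrum of $(M_0,g_0)$. Second, your parenthetical ``(or an attenuated version of it)'' understates the final step: what one actually recovers is the ray transform with constant attenuation $-2\lambda$ for \emph{every} $\lambda \neq 0$, and its invertibility on a simple surface is the result of Salo--Uhlmann; this is precisely where the hypothesis $\dim M = 3$ enters, since in higher dimensions only small attenuations are known to be invertible, which suffices for uniqueness via Paley--Wiener but not for a full reconstruction.
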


To complete the statement of the main results, let us give the definition of admissible manifolds. These arose in \cite{DKSaU} as the first class of manifolds beyond real-analytic or Einstein ones for which one can prove uniqueness results for the anisotropic inverse problems described above.

\begin{definition}
A compact oriented Riemannian manifold $(M,g)$ with smooth boundary is \emph{admissible} if $\dim(M) \geq 3$ and if $(M,g) \subset \subset (T^{\text{int}},g)$ where $T = \mR \times M_0$ is a cylinder with metric $g = c(e \oplus g_0)$ (here $c$ is a smooth positive function and $e$ is the Euclidean metric on $\mR$), and $(M_0,g_0)$ is an $(n-1)$-dimensional simple manifold.
\end{definition}

Thus, up to a conformal factor, an admissible manifold is embedded in a cylinder $(\mR \times M_0, e \oplus g_0)$ and therefore has a Euclidean direction. The transversal manifold $(M_0,g_0)$ needs further to be simple:

\begin{definition}
A compact manifold $(M_0,g_0)$ with boundary is \emph{simple} if for any $p \in M_0$ the exponential map $\exp_p$ is a diffeomorphism from its maximal domain in $T_p M_0$ onto $M_0$, and if $\partial M_0$ is strictly convex (meaning that the second fundamental form is positive definite).
\end{definition}

Examples of admissible manifolds include subdomains of the model spaces (Euclidean space, sphere minus a point, hyperbolic space), sufficently small subdomains of any conformally flat manifold, and domains in $\mR^n$ equipped with a metric of the form 
$$
g(x_1,x') = c(x) \left( \begin{array}{cc} 1 & 0 \\ 0 & g_0(x') \end{array} \right)
$$
where $c$ is a positive smooth function and where $g_0$ is a simple metric in the $x'$ variables.

The unique determination results corresponding to Theorems \ref{thm_main1} and \ref{thm_main2} were proved in \cite{DKSaU}, where also earlier work is discussed and further references are given. There are several recent results that are concerned with the two-dimensional case. If $M$ is a domain in $\mR^2$, \cite{Bu} proved the uniqueness result corresponding to Theorem \ref{thm_main2} and briefly discussed a reconstruction procedure. A proof with constructive character was given in \cite{GT_magnetic} for arbitrary Riemann surfaces $(M,g)$ with boundary (also for the magnetic Schr\"odinger operator), based on earlier nonconstructive proofs in \cite{GT}, \cite{IUY}. Various constructive results for the two-dimensional case, also on Riemann surfaces, appear in \cite{HM}, \cite{HN_2010}, \cite{HS}, \cite{HS2}, and for the three-dimensional case an improved reconstruction result is given in \cite{No_constructive}.

{\bf Outline of proof.} As mentioned, the uniqueness results corresponding to Theorems \ref{thm_main1} and \ref{thm_main2} were proved in \cite{DKSaU}, but the proofs were not constructive. The main point of the present paper is to give constructive proofs, following the well-known reconstruction procedure of Nachman \cite{N} in the case where $M$ is a bounded domain in $\mR^n$ and $g$ is the Euclidean metric. We do not make any claims about the practicality of the reconstruction procedure, but we do prove that all the steps in the corresponding uniqueness proofs in \cite{DKSaU} can be carried out in a constructive way.

The argument in \cite{N} involves \emph{complex geometrical optics} (CGO) solutions $u = e^{-\zeta \cdot x}(1+r)$ to the Schr\"odinger equation $(-\Delta+q)u = 0$ in $\mR^n$, and relies in a crucial way on a uniqueness notion for these solutions upon fixing decay at infinity. One has several equivalent ways of characterizing these solutions, and in particular it is possible to recover the boundary value $u|_{\partial M}$ as the unique solution to a boundary integral equation on $\partial M$ involving $\Lambda_{g,q}$ and other known quantities.

CGO solutions on admissible Riemannian manifolds were constructed in \cite{DKSaU} by Carleman estimates. The solutions were given in a compact manifold, and the construction did not involve a notion of  uniqueness. The paper \cite{KSaU} introduced a direct Fourier analytic construction of CGO solutions, valid in the cylinder $T$ and with a uniqueness notion obtained by fixing a decay condition in the Euclidean variable and Dirichlet boundary values on $\partial T$. We shall use the solutions constructed in \cite{KSaU} to prove Theorem \ref{thm_main2} (which implies Theorem \ref{thm_main1} after a simple reduction).

We next sketch the proof of Theorem \ref{thm_main2}. Let $(M,g) \subset \subset (T^{\text{int}},g)$ be an admissible manifold, and assume that $g = e \oplus g_0$ where $(M_0,g_0)$ is simple. Here we suppose, for simplicity, that $c=1$. We assume that $\Lambda_{g,q}$ and $(M,g)$ are known (thus also $\Lambda_{g,0}$ is known), and use the basic integral identity 
\begin{equation} \label{int_id}
\int_{\partial M} (\Lambda_{g,q}-\Lambda_{g,0})(u|_{\partial M}) v \,dS = \int_M q u v \,dV
\end{equation}
which is valid for any solutions $u, v \in H^2(M)$ of $(-\Delta_g + q)u = 0$ and $-\Delta_g v = 0$ in $M$. We take $u$ and $v$ to be suitable CGO solutions such that $u|_{\partial M}$ may be obtained from $\Lambda_{g,q}$ as the unique solution of a boundary integral equation, and $v|_{\partial M}$ is explicitly given. Then the left hand side of \eqref{int_id} is known. Taking the limit as $\tau \to \infty$ and varying certain parameters in the solutions, we recover in this way the quantity 
\begin{equation} \label{att_raytransform}
\int_0^{\infty} e^{-2\lambda r} \left[ \int_{-\infty}^{\infty} e^{2i\lambda x_1} q(x_1,r,\theta) \,dx_1 \right] \,dr
\end{equation}
for any $\theta \in S^{n-2}$, where $\lambda$ is any nonzero real number and $(r,\theta)$ are  polar normal coordinates in $M_0$ with center on $\partial M_0$. We have extended $q$ into $T \smallsetminus M$ as a function in $C_c^{\infty}(T^{\text{int}})$.

Now, since $(r,\theta)$ are polar normal coordinates, the curves $\gamma: r \mapsto (r,\theta)$ are unit speed geodesics in $(M_0,g_0)$ for any fixed $\theta$. Denoting the quantity in brackets in \eqref{att_raytransform} by $f_{\lambda}(r,\theta)$, it follows that we have recovered 
$$
\int_{\gamma} e^{-2\lambda r} f_{\lambda}(\gamma(r)) \,dr
$$
for any maximal geodesic $\gamma$ going from $\partial M_0$ into $M_0$, and for any $\lambda \neq 0$. This is the attenuated geodesic ray transform of $f_{\lambda}$ with constant attenuation $-2\lambda$, see \cite{DKSaU}, \cite{Sh}. For any $\lambda$ such that this transform can be inverted, we recover $f_{\lambda}$ which is just the one-dimensional Fourier transform 
$$
\mF_{x_1} \{q(\,\cdot\,,x')\}(-2 \lambda) \quad \text{for all } x' \in M_0.
$$

It was proved in \cite{DKSaU} and \cite{FSU} that the attenuated ray transform is invertible for small $\abs{\lambda}$, thus giving information on $\mF_{x_1} \{q(\,\cdot\,,x')\}$ for small frequencies. This determines the compactly supported function $q(\,\cdot\,,x')$ uniquely by the Paley-Wiener theorem. To make this step more constructive one would like to invert the attenuated ray transform for all $\lambda$, which would yield $q(\,\cdot\,,x')$ by taking the inverse Fourier transform. Up to now the argument has been valid for $\dim(M) \geq 3$. However, if $\dim(M)=3$ then $(M_0,g_0)$ is a 2D simple manifold, and the recent result \cite{SaU} shows that the attenuated ray transform is invertible for any attenuation. We can use the inversion procedure in \cite{SaU} to conclude the reconstruction of $q$ from $\Lambda_{g,q}$ if $(M,g)$ is $3$-dimensional.

{\bf Boundary integral equation.} The main new point in the proof is a Fredholm boundary integral equation characterizing the values on $\partial M$ of suitable CGO solutions in $T$. This equation has the form 
\begin{equation} \label{intro_boundaryintegralequation}
(\id + \gamma S_{\tau}(\Lambda_{g,q}-\Lambda_{g,0}))f = u_0 \quad \text{on } \partial M.
\end{equation}
Here $u_0$ is an explicit function on $\partial M$ depending on various parameters, $\gamma$ is the trace operator $H^2(M) \to H^{3/2}(\partial M)$, and $S_{\tau}$ is a special single layer potential depending on a large parameter $\tau$ and adapted to the CGO solutions and the geometry of the cylinder $(T,g) = (\mR \times M_0, e \oplus g_0)$. In fact, we have 
$$
S_{\tau} f(x) = \int_{\partial M} K_{\tau}(x,y) f(y) \,dS(y), \quad x \in T^{\text{int}} \smallsetminus \partial M,
$$
where the integral kernel $K_{\tau}(x,y)$ is explicitly determined by $\tau$ and by the Dirichlet eigenvalues and eigenfunctions of the Laplace-Beltrami operator on $(M_0,g_0)$. We establish basic properties of the single layer operator in Section \ref{section_boundarylayerpotentials}.

We prove that for suitable choices of $u_0$ and for $\abs{\tau}$ large, the equation \eqref{intro_boundaryintegralequation} has a unique solution $f \in H^{3/2}(\partial M)$, and one has $f = u|_{\partial M}$ where $u$ is the corresponding CGO solution. Since the operator on the left hand side of \eqref{intro_boundaryintegralequation} is determined by the boundary measurements and since $u_0$ is explicit, we can indeed determine the boundary values of CGO solutions by solving this Fredholm integral equation.

This approach is analogous to \cite{N} which considers the Euclidean case, except that the uniqueness notion for CGO solutions is obtained from decay conditions and Dirichlet boundary values on the cylinder $(T,g)$ instead of a decay condition at infinity in $\mR^n$. Recently in \cite{NS} another constructive approach appeared. There the boundary integral equation is obtained via Carleman estimates in $M$, and no extension of $M$ to a larger set is needed. It is presumable that a similar approach would work in our case. However, the single layer potential obtained from Carleman estimates is perhaps not so explicit as the operator $S_{\tau}$ introduced above.

We remark here that it would be interesting to establish reconstruction results corresponding to Theorems \ref{thm_main1} and \ref{thm_main2} for the magnetic Schr\"odinger equation or for the time-harmonic Maxwell equations. Uniqueness results for these equations are proved in \cite{DKSaU}, \cite{KSaU}, and constructive results in the Euclidean case appear in \cite{OPS}, \cite{Sa}.

{\bf Structure of paper.} Section \ref{section_introduction} is the introduction. The basic properties of the single layer operator $S_{\tau}$ and related Faddeev Green functions are considered in Section \ref{section_boundarylayerpotentials}. In Section \ref{section_boundaryintegralequation} we introduce several equivalent ways of characterizing CGO solutions, including the required boundary integral equation. The results in Sections \ref{section_boundarylayerpotentials} and \ref{section_boundaryintegralequation} are in fact valid for any transversal manifold $(M_0,g_0)$ with smooth boundary (not necessarily simple). The proofs of Theorems \ref{thm_main1} and \ref{thm_main2} are given in Section \ref{section_proofs}.

\subsection*{Acknowledgements}
C.E.K.~is partly supported by the NSF grant DMS-0968472. M.S.~is supported in part by the Academy of Finland. G.U.~is partly supported by NSF, a Chancellor Professorship at UC Berkeley and a Senior Clay Award.


\section{Boundary layer potentials} \label{section_boundarylayerpotentials}

{\bf Notation and function spaces.} In this section we assume that $(M,g)$ is a compact manifold with smooth boundary, having dimension $n \geq 3$, and that $(M,g) \subset \subset (T^{\text{int}},g)$ where $T = \mR \times M_0$ and $g = e \oplus g_0$, and $(M_0,g_0)$ is any compact $(n-1)$-dimensional manifold with boundary (no restrictions on the metric $g_0$). Points of $T$ are written as $x = (x_1,x')$ where $x_1$ is the Euclidean variable and $x'$ is a point in the transversal manifold $M_0$.

We write $\langle \,\cdot\,,\,\cdot\, \rangle$ for the inner product of tangent vectors, $1$-forms, and other tensors, and $\abs{\,\cdot\,}$ for the norm. The volume element in $(T,g)$ is 
$$
dV(x) = dV_g(x) = dx_1 \,dV_{g_0}(x')
$$
with $dV_{g_0}$ the volume element in $(M_0,g_0)$. We also write $\Gamma = \partial M \subset \subset T$, and denote by $dS$ the volume element on $\Gamma$.

Let $L^2(T) = L^2(T,dV)$ be the standard $L^2$ space in $T$, and let $H^s(T)$ be the corresponding $L^2$ Sobolev spaces. Since $M_0$ is compact, we define 
\begin{gather*}
H^s_{\text{loc}}(T) = \{ f \,;\, f \in H^s([-R,R] \times M_0) \text{ for any $R > 0$} \}, \\
H^s_c(T) = \{ f \in H^s(T) \,;\, f(x_1,x') = 0 \text{ when $\abs{x_1} \geq R$ for some $R > 0$} \}.
\end{gather*}
Writing $\br{t} = (1+t^2)^{1/2}$, we introduce for $\delta \neq 0$, $s \geq 0$ the weighted spaces 
\begin{gather*}
L^2_{\delta}(T) = \{ f \in L^2_{\text{loc}}(T) \,;\, \br{x_1}^{\delta} f \in L^2(T) \}, \\
H^s_{\delta}(T) = \{ f \in H^s_{\text{loc}}(T) \,;\, \br{x_1}^{\delta} f \in H^s(T) \}, \\
H^1_{\delta,0}(T) = \{ f \in H^1_{\delta}(T) \,;\, f|_{\partial T} = 0 \}, \\ H^1_{\text{loc},0}(T) = \{ f \in H^1_{\text{loc}}(T) \,;\, f|_{\partial T} = 0 \},
\end{gather*}
and 
$$
H^1_{-\infty,0}(T) = \bigcup_{\delta \in \mR} H^1_{\delta,0}(T).
$$
Also, $H^1_0(T) = \{ f \in H^1(T) \,;\, f|_{\partial T} = 0 \}$. We define, in the $L^2(T)$ duality, 
$$
H^{-1}(T) = (H^1_0(T))^*.
$$
On $\Gamma = \partial M$ we consider the usual space $L^2(\Gamma) = L^2(\Gamma,dS)$ and the corresponding Sobolev spaces $H^s(\Gamma)$.

Let $0 < \lambda_1 \leq \lambda_2 \leq \ldots$ be the Dirichlet eigenvalues of $-\Delta_{g_0}$ in $(M_0,g_0)$, and let $\{\phi_l\}_{l=1}^{\infty}$ be an orthonormal basis of $L^2(M_0)$ consisting of Dirichlet eigenfunctions, 
$$
-\Delta_{g_0} \phi_l = \lambda_l \phi_l \ \ \text{in } M_0, \quad \phi_l \in H^1_0(M_0).
$$
We write $\text{Spec}(-\Delta_{g_0}) = \{\lambda_l\}_{l=1}^{\infty}$. If $f \in L^2(T)$ we consider the partial Fourier coefficients 
$$
\tilde{f}(x_1,l) = (f(x_1,\,\cdot\,), \phi_l)_{L^2(M_0)}.
$$
The Parseval identity implies that 
$$
\norm{f}_{L^2(T)}^2 = \int_{-\infty}^{\infty} \sum_{l=1}^{\infty} \abs{\tilde{f}(x_1,l)}^2 \,dx_1.
$$

{\bf Standard single layer operator.} Let $K_0$ be the usual inverse of the Dirichlet Laplacian on $T$, defined as follows: since any $u \in H^1_0(T)$ satisfies the Poincar\'e inequality 
\begin{align*}
\int_T \abs{u}^2 \,dV &= \int_{-\infty}^{\infty} \int_{M_0} \abs{u(x_1,x')}^2 \,dV_{g_0} \,dx_1 \\
 &\leq \int_{-\infty}^{\infty} C \int_{M_0} \abs{d_{x'} u(x_1,x')}^2 \,dV_{g_0} \,dx_1 \leq C \int_T \abs{du}^2 \,dV,
\end{align*}
the bilinear form $B(u,v) = \int_T \langle du, dv \rangle \,dV$ is bounded and coercive on $H^1_0(T)$. Consequently, for any $F \in H^{-1}(T)$ there is a unique weak solution $u = K_0 F$ in $H^1_0(T)$ of the equation $-\Delta_g u = F$ in $T$. Thus $K_0$ is a bounded linear operator 
$$
K_0: H^{-1}(T) \to H^1_0(T), \quad -\Delta_g K_0 = \id.
$$
We also consider the trace operator which restricts functions to $\Gamma$, 
$$
\gamma: H^1_0(T) \to H^{1/2}(\Gamma), \quad \gamma u = u|_{\Gamma}.
$$
The adjoint of $\gamma$ satisfies 
\begin{gather*}
\gamma^*: H^{-1/2}(\Gamma) \to H^{-1}(T), \\
(\gamma^* h, \varphi)_{L^2(T)} = (h,\gamma \varphi)_{L^2(\Gamma)} \ \ \text{for any } \varphi \in H^1_0(T).
\end{gather*}
Thus formally $\gamma^* h = h \,dS$.

\begin{definition}
The standard single layer operator on $T$ is the map 
$$
S_0 = K_0 \gamma^*: H^{-1/2}(\Gamma) \to H^1_0(T).
$$
\end{definition}

The next result gives the basic jump and mapping properties of $S_0$. Here we write $M_- = M^{\text{int}}$, $M_+ = T \smallsetminus M$, and $\gamma_{\mp} u = (u|_{M_{\mp}})|_{\Gamma}$ for the restriction of $u$ to $\Gamma$ from the interior or exterior. If $u$ is a function on $T \smallsetminus \Gamma$ such that $u|_{M_{\mp}}$ are $H^1$ in $M_{\mp}$ and satisfy $-\Delta_g u = 0$ in $M_{\mp}$, we define the normal derivatives from the interior or exterior weakly as elements of $H^{-1/2}(\Gamma)$ by 
$$
((\partial_{\nu} u)_{\mp}, h)_{L^2(\Gamma)} = \pm (d(u|_{M_{\mp}}),de_h)_{L^2(M_{\mp})}, \quad h \in H^{1/2}(\Gamma),
$$
where $e_h \in H^1(T)$ is any function with $e_h|_{\Gamma} = h$ and $e_h|_{\partial T} = 0$. The jumps on $\Gamma$ are defined by 
\begin{align*}
[u]_{\Gamma} &= \gamma_- u - \gamma_+ u, \\
[\partial_{\nu} u]_{\Gamma} &= (\partial_{\nu} u)_- - (\partial_{\nu} u)_+.
\end{align*}

\begin{lemma}
If $f \in H^{-1/2}(\Gamma)$ then $u = S_0 f \in H^1_0(T)$ satisfies 
\begin{align*}
-\Delta_g u &= 0 \quad \text{in } M_{\pm}, \\
[u]_{\Gamma} &= 0, \\
[\partial_{\nu} u]_{\Gamma} &= f.
\end{align*}
If $f \in H^s(\Gamma)$ for $s \geq -1/2$ then $u|_{M_-} \in H^{s+3/2}(M_-)$ and $u|_{M_+} = \tilde{u}|_{M_+}$ for some $\tilde{u} \in H^{s+3/2}_{\text{loc}}(T) \cap H^1_0(T)$.
\end{lemma}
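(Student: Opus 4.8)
The plan is to read the jump relations directly off the variational characterization of $K_0$, and then to reduce the regularity statement to the single assertion that $\gamma u$ gains one derivative on $\Gamma$ relative to $f$; that gain is the only point that requires genuine work. By definition $u = K_0 \gamma^* f$ is the unique element of $H^1_0(T)$ with $\int_T \ip{du}{d\varphi}\,dV = (f,\gamma\varphi)_{L^2(\Gamma)}$ for all $\varphi \in H^1_0(T)$. Testing against $\varphi \in C_c^\infty(M_-)$ and against $\varphi \in C_c^\infty(T^{\text{int}} \smallsetminus M)$, for which $\gamma\varphi = 0$, gives $-\Delta_g u = 0$ in $M_\pm$; and since $u \in H^1(T)$, its traces on $\Gamma$ from the two sides coincide, so $[u]_\Gamma = 0$. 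For the conormal jump, fix $h \in H^{1/2}(\Gamma)$ and a lift $e_h \in H^1_0(T)$ with $e_h|_\Gamma = h$; because $T \smallsetminus \Gamma = M_- \cup M_+$ and $du$ restricts to $d(u|_{M_\mp})$ on the two pieces, the weak definition of $(\partial_\nu u)_\mp$ gives $([\partial_\nu u]_\Gamma, h)_{L^2(\Gamma)} = (d(u|_{M_-}),de_h)_{L^2(M_-)} + (d(u|_{M_+}),de_h)_{L^2(M_+)} = \int_T \ip{du}{de_h}\,dV = (f,h)_{L^2(\Gamma)}$, so $[\partial_\nu u]_\Gamma = f$. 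This disposes of the first part.

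For the second part, let $s \ge -1/2$ and $f \in H^s(\Gamma)$; I claim it is enough to show $h := \gamma u \in H^{s+1}(\Gamma)$. Granting this, $u|_{M_-}$ solves $-\Delta_g(u|_{M_-}) = 0$ in the smooth bounded domain $M_-$ with Dirichlet data $h \in H^{s+1}(\Gamma)$, so $u|_{M_-} \in H^{s+3/2}(M_-)$ by elliptic regularity. Likewise, on a collar of $\Gamma$ inside $M_+$ the function $u$ is harmonic with data $h$ on $\Gamma$ and smooth data on the inner face (by interior regularity $u$ is $C^\infty$ away from $\Gamma \cup \partial T$, and smooth up to $\partial T$), whence $u|_{M_+} \in H^{s+3/2}$ near $\Gamma$ from the exterior and is smooth on the rest of $\overline{M_+}$. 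Extending $u|_{M_+}$ across the smooth hypersurface $\Gamma$ by a Sobolev extension operator and patching with a cutoff supported near $\Gamma$ then produces $\tilde u \in H^{s+3/2}_{\text{loc}}(T)$ with $\tilde u|_{M_+} = u|_{M_+}$; since $\tilde u$ agrees with $u \in H^1_0(T)$ outside the bounded set $M$ and differs from $u$ only on $M_-$, we also have $\tilde u \in H^1_0(T)$.

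It remains to prove $\gamma u \in H^{s+1}(\Gamma)$, which is the heart of the matter. The most direct route is to note that $\gamma S_0 = \gamma K_0 \gamma^*$ is, modulo a smoothing operator, the classical single-layer boundary operator on the smooth compact hypersurface $\Gamma$ associated with $-\Delta_g$: the difference between $K_0\gamma^* f$ and $E*(\gamma^* f)$ for a local parametrix $E$ of $-\Delta_g$ near $\Gamma$ is harmonic, hence $C^\infty$, near $\Gamma$. Thus $\gamma S_0$ is a classical elliptic pseudodifferential operator of order $-1$ on $\Gamma$ with principal symbol $\tfrac{1}{2}\abs{\xi}_g^{-1}$, and it maps $H^s(\Gamma) \to H^{s+1}(\Gamma)$. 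Alternatively, and staying closer to the elementary framework used here, one can argue via Dirichlet-to-Neumann maps: since $u|_{M_-}$ and $u|_{M_+}$ are harmonic with common trace $h$, write $u|_{M_\mp} = P_\mp h$ for the harmonic extensions ($P_+$ carrying zero data on $\partial T$, which is solvable by the same Poincar\'e inequality used to define $K_0$); then $(\partial_\nu u)_- = \Lambda_- h$ and $(\partial_\nu u)_+ = -\Lambda_+ h$, where $\Lambda_\mp$ are the Dirichlet-to-Neumann maps of $(M_-,g)$ and $(M_+,g)$ on $\Gamma$, each a classical pseudodifferential operator of order $1$ on $\Gamma$ with principal symbol $\abs{\xi}_g$ (a purely local statement near $\Gamma$, unaffected by the noncompactness of $M_+$ or by $\partial T$). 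The conormal jump relation becomes $(\Lambda_- + \Lambda_+) h = f$ with $\Lambda_- + \Lambda_+$ elliptic of order $1$, so the elliptic estimate $\norm{h}_{H^r(\Gamma)} \le C\bigl(\norm{(\Lambda_-+\Lambda_+)h}_{H^{r-1}(\Gamma)} + \norm{h}_{H^{r-1}(\Gamma)}\bigr)$, started from the a priori bound $h \in H^{1/2}(\Gamma)$ and bootstrapped in finitely many steps up to $r = s+1$, gives $h \in H^{s+1}(\Gamma)$.

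The main obstacle is exactly this gain-of-one-derivative assertion $\gamma u \in H^{s+1}(\Gamma)$, which rests on the pseudodifferential structure near $\Gamma$ — equivalently on the ellipticity of the single-layer boundary operator, or of $\Lambda_- + \Lambda_+$. With it in hand the jump relations and the remaining regularity statements are routine interior/boundary elliptic regularity together with Sobolev extension. The one piece of bookkeeping is that $M_+$ is noncompact with two boundary components; but since $\Gamma = \partial M \subset\subset T^{\text{int}}$ is disjoint from $\partial T$, every regularity argument localizes near $\Gamma$, where the geometry is that of an ordinary smooth domain.
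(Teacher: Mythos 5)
Your derivation of the three jump relations is essentially the paper's: harmonicity in $M_\pm$ by testing against compactly supported $\varphi$, $[u]_\Gamma = 0$ from $u \in H^1(T)$, and $[\partial_\nu u]_\Gamma = f$ by splitting $(du,de_h)_{L^2(T)}$ over $M_-$ and $M_+$ and using the variational identity for $K_0$. Where you genuinely diverge is the regularity statement. The paper treats it as a transmission problem: given $-\Delta_g u = 0$ in $M_\pm$, $[u]_\Gamma = 0$, $[\partial_\nu u]_\Gamma = f \in H^{k+1/2}(\Gamma)$, it invokes the transmission regularity theorem (McLean, Theorem 4.20) to get $u|_{M_\pm} \in H^{k+2}$ near $\Gamma$ directly, and then interpolates to cover non-integer $s$; the corollary $\gamma S_0: H^s(\Gamma) \to H^{s+1}(\Gamma)$ is deduced afterwards from the lemma. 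You invert this logical order: you first prove the one-derivative gain for the trace $\gamma u$ --- either by identifying $\gamma S_0$, modulo smoothing, with the elliptic order $-1$ single-layer boundary operator of a local parametrix, or by writing the conormal jump relation as $(\Lambda_- + \Lambda_+)\gamma u = f$ with $\Lambda_- + \Lambda_+$ elliptic of order $1$ and bootstrapping --- and then recover the interior/exterior Sobolev regularity from elliptic regularity for the Dirichlet problem with $H^{s+1}$ data, plus an extension-and-cutoff construction of $\tilde u$. Both routes are sound and both localize near $\Gamma$, so the noncompactness of $M_+$ and the presence of $\partial T$ are harmless, as you note (for the solvability of the exterior Dirichlet problem one extends by zero across $\partial M_+$ and reuses the Poincar\'e inequality on $T$). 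The trade-off is that the paper's citation of the transmission property is shorter and avoids pseudodifferential calculus on $\Gamma$ altogether, while your argument carries more machinery but yields the mapping property $\gamma S_0: H^s(\Gamma) \to H^{s+1}(\Gamma)$ --- the paper's subsequent corollary --- as a by-product rather than a consequence, and handles fractional $s$ without a separate interpolation step.
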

\begin{proof}
Harmonicity and the first jump property are direct consequences of the properties of $K_0$. The definitions also imply that for $h \in H^{1/2}(\Gamma)$, 
$$
([\partial_{\nu} u]|_{\Gamma},h)_{L^2(\Gamma)} = (du,de_h)_{L^2(T)} = (\gamma^* f,e_h)_{L^2(T)} = (f,h)_{L^2(\Gamma)}.
$$
This shows the second jump property. If $f \in H^{k+1/2}(\Gamma)$ with $k \geq 0$, then $u$ satisfies 
\begin{align*}
-\Delta_g u &= 0 \quad \text{in } M_{\pm}, \\
[u]_{\Gamma} &= 0, \\
[\partial_{\nu} u]_{\Gamma} &= f \in H^{k+1/2}(\Gamma).
\end{align*}
The transmission property \cite[Theorem 4.20]{McLean} implies that $u|_{M_{\pm}}$ are $H^{k+2}$ near $\Gamma$. Then the properties for $u$ follow from standard interior and boundary regularity for $-\Delta_g$ and from interpolation.
\end{proof}

\begin{cor}
The trace single layer potential satisfies for $s \geq -1/2$ 
$$
\gamma S_0: H^s(\Gamma) \to H^{s+1}(\Gamma).
$$
\end{cor}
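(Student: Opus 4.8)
The statement is essentially a bookkeeping consequence of the preceding Lemma. The plan is: given $f \in H^s(\Gamma)$ with $s \geq -1/2$, apply the Lemma to obtain interior elliptic regularity for $u = S_0 f$, then compose with the trace theorem on $M_-$.

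First I would set $u = S_0 f = K_0 \gamma^* f \in H^1_0(T)$. By the Lemma, $u|_{M_-} \in H^{s+3/2}(M_-)$. Here one should note that the trace $\gamma S_0 f \in H^{1/2}(\Gamma)$, defined via $\gamma: H^1_0(T) \to H^{1/2}(\Gamma)$, agrees with the interior trace $\gamma_-(u|_{M_-}) = (u|_{M_-})|_{\Gamma}$: indeed $\gamma u = \gamma_- u$ by continuity of traces, and the jump relation $[u]_\Gamma = 0$ (also from the Lemma) makes this unambiguous. Thus $\gamma S_0 f$ equals the trace to $\Gamma$ of the function $u|_{M_-} \in H^{s+3/2}(M_-)$.

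Next I would invoke the standard trace theorem on the compact manifold $M_-$ with smooth boundary $\Gamma$: restriction to $\Gamma$ is a bounded operator $H^t(M_-) \to H^{t-1/2}(\Gamma)$ whenever $t > 1/2$. Since $s \geq -1/2$ gives $t := s + 3/2 \geq 1 > 1/2$, we conclude $\gamma S_0 f = (u|_{M_-})|_{\Gamma} \in H^{s+1}(\Gamma)$, as claimed.

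For boundedness of $\gamma S_0: H^s(\Gamma) \to H^{s+1}(\Gamma)$, one option is to track constants through the chain $f \mapsto \gamma^* f \in H^{-1}(T) \mapsto K_0 \gamma^* f \in H^1_0(T) \mapsto u|_{M_-} \in H^{s+3/2}(M_-) \mapsto (u|_{M_-})|_{\Gamma}$, each arrow being a bounded linear map (the middle regularity bound being the quantitative form of the transmission estimate \cite[Theorem 4.20]{McLean} used in the proof of the Lemma, combined with interior and boundary elliptic estimates and interpolation). Alternatively, since $\gamma S_0$ is a well-defined linear map between the Banach spaces $H^s(\Gamma)$ and $H^{s+1}(\Gamma)$ and the inclusion $H^{s+1}(\Gamma) \hookrightarrow H^{1/2}(\Gamma)$ is continuous while $\gamma S_0: H^s(\Gamma) \to H^{1/2}(\Gamma)$ is already known to be bounded, the closed graph theorem gives boundedness at once. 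The only point requiring any care is the identification of the two notions of trace in the second step, and this is immediate from $[u]_\Gamma = 0$; there is no genuine obstacle here.
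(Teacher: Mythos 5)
Your proof is correct and follows exactly the route the paper intends: the corollary is an immediate consequence of the preceding lemma's conclusion $u|_{M_-} \in H^{s+3/2}(M_-)$ combined with the trace theorem $H^{s+3/2}(M_-) \to H^{s+1}(\Gamma)$, which is why the paper states it without proof. Your extra care about identifying $\gamma u$ with $\gamma_- u$ via $[u]_\Gamma = 0$ and about boundedness via the closed graph theorem is sound but not needed beyond what you wrote.
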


Since $K_0$ maps $C^{\infty}_c(T^{\text{int}})$ to $L^2(T^{\text{int}})$, the Schwartz kernel theorem shows that there is a distributional integral kernel $K_0(x,y)$. (We restrict to $T^{\text{int}}$ to avoid having to talk about distributions on manifolds with boundary.) Then formally 
$$
S_0 f(x) = \int_{\Gamma} K_0(x,y) f(y) \,dS(y).
$$
We will need some basic properties of the integral kernel. These are easily obtained by comparing to the Green function on compact manifolds \cite{aubin}, \cite{taylor_toolspde}.

\begin{lemma}
The kernel $K_0$ is smooth in $T^{\text{int}} \times T^{\text{int}}$ away from the diagonal, and it satisfies (with $d$ the Riemannian distance) 
$$
\abs{K_0(x,y)} \leq C_U d(x,y)^{2-n}, \quad x,y \in \overline{U} \subset \subset T^{\text{int}}.
$$
Also, $K_0(x,y) = K_0(y,x)$, and for any $x \in T^{\text{int}}$ one has $\Delta_g(K_0(x,\,\cdot\,)) = 0$ in $T^{\text{int}} \smallsetminus \{x\}$.
\end{lemma}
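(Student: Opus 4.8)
The plan is to read off all four assertions from the single defining property $-\Delta_g K_0 = \id$ on $H^{-1}(T)$, combined with interior elliptic regularity and a comparison with the Green function of a suitable closed manifold. I would begin with the symmetry. Since the Dirichlet form $B(u,v) = \int_T \ip{du}{dv}\,dV$ is real, symmetric and coercive on $H^1_0(T)$, its inverse $K_0$ is self-adjoint as an operator on $L^2(T)$: for $F,G \in L^2(T)$ one has $(K_0 F, G)_{L^2(T)} = B(K_0 F, K_0 G) = (F, K_0 G)_{L^2(T)}$. Hence the Schwartz kernel obeys $K_0(x,y) = K_0(y,x)$ as a distribution on $T^{\text{int}} \times T^{\text{int}}$. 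Testing $-\Delta_g K_0 = \id$ against products $\varphi \otimes \psi$ of functions in $C^\infty_c(T^{\text{int}})$ shows $-\Delta_{g,x} K_0(x,y) = \delta(x-y)$, the kernel of the identity, and by symmetry also $-\Delta_{g,y} K_0(x,y) = \delta(x-y)$.

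Next I would treat harmonicity and smoothness away from the diagonal. For fixed $x$, the previous identity gives $\Delta_g(K_0(x,\,\cdot\,)) = 0$ in $T^{\text{int}} \smallsetminus \{x\}$ in the distributional sense, so $K_0(x,\,\cdot\,)$ is $C^\infty$ there by Weyl's lemma; this also settles the last assertion. For \emph{joint} smoothness, I would observe that $-\Delta_{g,x} - \Delta_{g,y}$ is precisely $-\Delta$ for the product metric $g \oplus g$ on $T^{\text{int}} \times T^{\text{int}}$, hence elliptic, and that it annihilates $K_0$ on the complement of the diagonal (where the two delta contributions vanish); interior elliptic regularity then yields $K_0 \in C^\infty$ off the diagonal.

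The substantive step is the bound $\abs{K_0(x,y)} \le C_U d(x,y)^{2-n}$ on $\overline U \times \overline U$, and here I would follow the suggestion in the text and compare with a closed manifold. Given $U$ with $\overline U \subset\subset T^{\text{int}}$, pick $\Omega = (-R,R) \times W$ with $\overline U \subset\subset \Omega \subset\subset T^{\text{int}}$ and $W \subset\subset M_0^{\text{int}}$, and embed $\Omega$ isometrically into a closed manifold $(N, g_N)$ --- for instance $N = (\mR/L\mZ) \times N_0$ with $L$ large and $N_0$ a closed $(n-1)$-manifold carrying an extension of $g_0$ with $W \subset\subset N_0$. Let $G_N$ be the Green function of $(N,g_N)$; by \cite{aubin}, \cite{taylor_toolspde} it is symmetric, smooth off the diagonal, satisfies $\abs{G_N(x,y)} \le C d(x,y)^{2-n}$, and solves $-\Delta_{g_N,y} G_N(x,y) = \delta(x-y) - \mathrm{vol}(N)^{-1}$. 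On $\Omega \times \Omega$, where the metrics coincide, the difference $w := K_0 - G_N$ then satisfies the elliptic equation $(-\Delta_{g,x} - \Delta_{g,y}) w = 2\,\mathrm{vol}(N)^{-1}$ with smooth right-hand side, so $w \in C^\infty(\Omega \times \Omega)$ and in particular $w$ is bounded on $\overline U \times \overline U$. Since $d(x,y)$ is bounded above on $\overline U \times \overline U$ (so $d(x,y)^{2-n}$ is bounded below) and coincides with the distance in $N$ for nearby points, adding the bound on $G_N$ yields $\abs{K_0(x,y)} \le C_U d(x,y)^{2-n}$.

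The main obstacle I anticipate is the bookkeeping in this comparison step: producing a closed manifold into which a full neighbourhood of $\overline U$ embeds isometrically, and then verifying that the difference $w$ is smooth with estimates uniform in $x \in \overline U$, so that $C_U$ depends only on $U$. An alternative that sidesteps the closed manifold is to subtract, near the diagonal, a Hadamard parametrix for $-\Delta_g$ built directly in $T$ and to use elliptic regularity to show the remainder is locally bounded, with constants uniform on compact subsets of $T^{\text{int}}$; either route works, and everything else reduces to standard elliptic regularity.
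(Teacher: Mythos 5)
Your proposal is correct, and its core step --- writing $K_0$ on a compact piece of $T^{\text{int}}$ as a known Green function plus a smooth remainder --- is the same as the paper's; the differences are in execution. The paper compares with the \emph{Dirichlet} Green function $G(x,y)$ of a compact manifold with boundary $(\overline{W},g)$, $U \subset\subset W \subset\subset T^{\text{int}}$ (citing Aubin, Section 4.2), which avoids your construction of a closed manifold $(N,g_N)$ isometrically containing a neighbourhood of $\overline{U}$; that construction is legitimate (cut $g_0$ off against an arbitrary smooth metric on the double of a slightly larger compact piece of $M_0^{\text{int}}$, and take $L$ larger than the $x_1$-width of $U$), but it is exactly the bookkeeping you flag, and the manifold-with-boundary route sidesteps it entirely. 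In exchange, the paper proves smoothness of the remainder $R = K_0 - G$ by hand: for $\varphi \in C^{\infty}_c(U)$ the function $v = K_0\varphi - \int_W G(\cdot,y)\varphi(y)\,dV(y)$ is harmonic in $W$ with $\norm{v}_{H^1(W)} \leq C\norm{\varphi}_{L^2(U)}$, interior estimates and Sobolev embedding give $\norm{\nabla_x^k R(x,\cdot)}_{L^2(U)} \leq C_k$ uniformly in $x$, and joint smoothness follows using $R(x,y)=R(y,x)$. Your alternative --- observing that $-\Delta_{g,x}-\Delta_{g,y}$ is the elliptic Laplacian of the product metric, that it annihilates $K_0$ off the diagonal, and that it sends $K_0 - G_N$ to the constant $2\,\mathrm{vol}(N)^{-1}$ on all of $\Omega\times\Omega$ --- is arguably cleaner, and it has the additional merit of delivering off-diagonal smoothness and the harmonicity statement for $K_0$ directly from hypoellipticity, with the comparison needed only for the pointwise bound; the small remaining checks (absorbing the bounded remainder into $d(x,y)^{2-n}$ since $2-n<0$ and $d$ is bounded on $\overline{U}\times\overline{U}$, and comparing $d_N$ with $d$ for nearby points) are exactly as you describe. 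Both routes obtain the symmetry identically, from the symmetry of $-\Delta_g$ (equivalently of the form $B$).
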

\begin{proof}
The condition $K_0(x,y) = K_0(y,x)$ follows since $-\Delta_g$ is symmetric. Let $U \subset \subset W \subset \subset T^{\text{int}}$ where $U$ is open and $(\overline{W},g)$ is a compact manifold with smooth boundary, and let $G(x,y)$ be the Dirichlet Green function for the Laplacian in $(\overline{W},g)$ \cite[Section 4.2]{aubin}. We will prove that 
$$
K_0(x,y) = G(x,y) + R(x,y), \quad x,y \in \overline{U}
$$
where $R \in C^{\infty}(\overline{U} \times \overline{U})$. Since the function $G(x,y)$ has the stated properties by \cite[Theorem 4.17]{aubin}, they also follow for $K_0$ by using smoothness of $R$ and simple arguments.

Consider $\varphi \in C^{\infty}_c(U)$ and let $v = u_0 - u_1$ where $u_0 = K_0 \varphi$ and 
$$
u_1(x) = \int_W G(x,y) \varphi(y) \,dV(y).
$$
Then $-\Delta_g u_1 = \varphi$ in $W$ with $u_1|_{\partial W} = 0$. It follows that $v \in H^1(W)$, $\norm{v}_{H^1(W)} \leq C \norm{\varphi}_{L^2(U)}$, and $\Delta_g v = 0$ in $W$. By elliptic regularity and Sobolev embedding, 
$$
\norm{\nabla^k v}_{L^{\infty}(U)} \leq C_k \norm{v}_{L^2(W)} \leq C_k \norm{\varphi}_{L^2(U)}.
$$
This may be rewritten as 
$$
\left\lvert \int_W \nabla_x^k R(x,y) \varphi(y) \,dV(y) \right\rvert \leq C_k \norm{\varphi}_{L^2(U)}.
$$
Consequently $\norm{\nabla_x^k R(x,\,\cdot\,)}_{L^2(U)} \leq C_k$ uniformly over $x \in U$. Now $R(x,y) = R(y,x)$, so indeed $R$ is smooth in $\overline{U} \times \overline{U}$.
\end{proof}

{\bf $\tau$-dependent single layer potential.} In \cite{KSaU} (see also Proposition \ref{prop:normestimate_uniqueness_2} below) it is shown that for any $\tau \in \mR$ with $\abs{\tau} \geq 1$ and $\tau^2 \notin \text{Spec}(-\Delta_{g_0})$, and given $\delta > 1/2$, one has a bounded linear operator 
\begin{equation*}
G_{\tau}: L^2_{\delta}(T) \to H^2_{-\delta}(T) \cap H^1_{-\delta,0}(T)
\end{equation*}
such that $u = G_{\tau} f$ is the unique solution in $H^1_{-\infty,0}(T)$ of the equation $e^{\tau x_1} (-\Delta_g) e^{-\tau x_1} u = f$ in $T$, for any $f \in L^2_{\delta}(T)$.


If $\tau$ is as above, we define another operator 
\begin{gather*}
K_{\tau}: L^2_{c}(T) \to H^2_{\text{loc}}(T) \cap H^1_{\text{loc},0}(T), \\
K_{\tau} f = e^{-\tau x_1} G_{\tau} (e^{\tau x_1} f).
\end{gather*}
From the properties of $G_{\tau}$, it immediately follows that $K_{\tau}$ is an inverse for the Laplacian: one has 
$$
-\Delta_g K_{\tau} = \id \quad \text{on } L^2_c(T).
$$
The map $K_{\tau}$ is a \emph{Faddeev type Green operator} on $T$. It differs from the standard Green operator $K_0$ by having exponential factors in its integral kernel. Also, $K_{\tau}-K_0$ maps $L^2_c(T)$ into $C^{\infty}(T)$ since $-\Delta_g(K_{\tau}-K_0)f = 0$ for all $f \in L^2_c(T)$. This suggests the following result. We will do the proof with some care to ensure that the boundary $\partial T$ does not pose a problem.

\begin{lemma} \label{claim:layer_smoothkernel}
$K_{\tau} = K_0 + R_{\tau}$ where $R_{\tau}$ is an integral operator with kernel in $C^{\infty}(T \times T)$.
\end{lemma}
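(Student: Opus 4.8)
The plan is to show that the difference operator $R_\tau = K_\tau - K_0$, which maps $L^2_c(T)$ into harmonic functions on $T$, actually has a smooth kernel on all of $T \times T$, including up to the boundary $\partial T$. As already observed, for $f \in L^2_c(T)$ both $K_\tau f$ and $K_0 f$ solve $-\Delta_g u = f$ in $T$ with vanishing Dirichlet data on $\partial T$, so $w := R_\tau f$ satisfies $\Delta_g w = 0$ in $T$ and $w|_{\partial T} = 0$. The subtle point is that $K_\tau f$ lies in $H^2_{\mathrm{loc}}(T) \cap H^1_{\mathrm{loc},0}(T)$ while $K_0 f \in H^1_0(T)$; one has to be slightly careful that the boundary condition on $\partial T$ is genuinely inherited by $w$, but this follows since both terms have zero trace on $\partial T$ in the $H^{1/2}(\partial T)$ sense. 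The mechanism is then exactly the one used in the proof of the previous lemma (the $K_0(x,y) = G(x,y) + R(x,y)$ argument): a harmonic function controlled in $L^2$ on a neighborhood is smooth, with all derivatives estimated by the $L^2$ norm, via elliptic regularity up to the boundary and Sobolev embedding.

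Concretely, I would first fix $x_0 \in T$ and a relatively open neighborhood $W \ni x_0$ with $\overline W \subset\subset T$ chosen so that $(\overline W, g)$ is a compact manifold with smooth boundary (if $x_0 \in \partial T$, take $W$ to be a half-ball-type neighborhood meeting $\partial T$; the cylindrical product structure $T = \mathbb R \times M_0$ makes such choices transparent). Pick a cutoff so that one reduces to testing against $\varphi \in C^\infty_c(U)$ for a smaller $U \subset\subset W$. For such $\varphi$, set $w = R_\tau \varphi = K_\tau \varphi - K_0 \varphi$. Then $\Delta_g w = 0$ in $W$, $w|_{\partial T \cap \overline W} = 0$, and $\norm{w}_{L^2(W)} \le \norm{K_\tau \varphi}_{L^2(W)} + \norm{K_0 \varphi}_{L^2(W)} \le C \norm{\varphi}_{L^2(U)}$, using the mapping properties of $G_\tau$ (hence of $K_\tau$) on the bounded region $W$ and boundedness of $K_0 : H^{-1}(T) \to H^1_0(T)$. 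Elliptic regularity for $-\Delta_g$ up to the portion of $\partial T$ where $w$ vanishes, combined with interior estimates elsewhere, gives
\[
\norm{\nabla^k w}_{L^\infty(U)} \le C_k \norm{w}_{L^2(W)} \le C_k \norm{\varphi}_{L^2(U)}
\]
for every $k$. Writing this as $\big\lvert \int_U \nabla_x^k R_\tau(x,y) \varphi(y)\,dV(y) \big\rvert \le C_k \norm{\varphi}_{L^2(U)}$ and letting $\varphi$ range over $C^\infty_c(U)$ shows $\norm{\nabla_x^k R_\tau(x,\cdot)}_{L^2(U)} \le C_k$ uniformly for $x \in U$, so $R_\tau(x,y)$ is smooth in $x$ locally uniformly in $y$; combined with the fact that the adjoint operator $R_\tau^*$ is of the same type (it is $K_{-\tau}^* - K_0$, again mapping into harmonic functions vanishing on $\partial T$, by the reflection $x_1 \mapsto -x_1$ symmetry and self-adjointness of $K_0$), one gets joint smoothness on $\overline U \times \overline U$. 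Since $x_0$ was arbitrary, $R_\tau \in C^\infty(T \times T)$.

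The main obstacle is bookkeeping at the boundary $\partial T$: one must verify (i) that $w = R_\tau \varphi$ really satisfies the homogeneous Dirichlet condition on $\partial T \cap \overline W$ in a sense strong enough to feed into boundary elliptic regularity — this is where the care promised in the paragraph preceding the lemma is needed, and it rests on $K_\tau \varphi \in H^1_{\mathrm{loc},0}(T)$ and $K_0\varphi \in H^1_0(T)$ both having zero trace — and (ii) that the local region $W$ can be taken to be a genuine smooth compact manifold-with-boundary containing a boundary patch of $T$, for which one needs boundary elliptic regularity estimates; both are available because near $\partial T$ the geometry is the flat product of $\mathbb R$ with a collar of $\partial M_0$, so standard half-space elliptic estimates apply after straightening. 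Away from $\partial T$ the argument is the purely interior one already carried out for $K_0 - G$ in the previous lemma. I expect the symmetry argument for $R_\tau^*$ to require only the observation that conjugation $e^{\tau x_1}(-\Delta_g)e^{-\tau x_1}$ has formal transpose $e^{-\tau x_1}(-\Delta_g)e^{\tau x_1}$, so that $G_\tau^* = G_{-\tau}$ and hence $K_\tau^* = K_{-\tau}$ as operators on $L^2_c(T)$, after which $R_\tau^* = K_{-\tau} - K_0 = R_{-\tau}$ is visibly of the same form.
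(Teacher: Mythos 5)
Your argument is correct in outline but follows a genuinely different route from the paper. The paper does not argue softly at all: it takes the explicit eigenfunction representation of $G_{\tau}$ from \cite{KSaU}, writes $K_{\tau}f$ and $K_0 f$ as sums over the transversal Dirichlet modes of one--dimensional convolution operators, and evaluates the difference of the convolution kernels by a contour shift and the residue theorem. The outcome is the closed formula
\[
R_{\tau}(x,y) \;=\; -\sum_{l:\ \sqrt{\lambda_l}<\abs{\tau}} \frac{1}{2\sqrt{\lambda_l}}\, e^{\mp\sqrt{\lambda_l}(x_1-y_1)}\,\phi_l(x')\phi_l(y'),
\]
a \emph{finite} sum of smooth functions, from which smoothness on all of $T\times T$ (boundary included), the vanishing of $R_{\tau}u$ on $\partial T$, and the transpose relation $R_{\tau}(y,x)=R_{-\tau}(x,y)$ are all immediate. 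Your approach buys generality and brevity of prerequisites — it never uses the explicit form of $G_{\tau}$, only that $K_{\tau}$ and $K_0$ are both inverses of $-\Delta_g$ with zero trace on $\partial T$ and suitable $L^2$ bounds — at the price of losing the explicit kernel, which is harmless for this lemma. The elliptic-regularity steps themselves (harmonicity of $R_{\tau}\varphi$, zero Dirichlet data on $\partial T$, local boundary regularity in the product geometry, and the passage from $\norm{\nabla_x^k R_{\tau}(x,\cdot)}_{L^2(U)}\le C_k$ to smoothness of the kernel) are sound and parallel the paper's treatment of $K_0-G$.

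The one place you should shore up is the adjoint identity $K_{\tau}^*=K_{-\tau}$, which you use to transfer regularity to the $y$-variable. The ``formal transpose'' computation hides a real issue on the infinite cylinder: for compactly supported data, $K_{\tau}f$ grows like $e^{\sqrt{\lambda_l}\abs{x_1}}$ in one direction while $K_{-\tau}h$ grows in the other, so in Green's identity on $[-R,R]\times M_0$ the boundary terms at $x_1=\pm R$ are products of exponentially growing and decaying modes that are merely $O(1)$, not $o(1)$; the naive integration by parts does not close. Two clean fixes: either identify $K_{\tau}^*h$ as the unique solution in the class $e^{-\tau x_1}H^1_{-\infty,0}(T)$ singled out by Proposition \ref{prop:normestimate_uniqueness_2} (uniqueness for $G_{-\tau}$), or avoid the adjoint altogether by noting that for $\psi\in C^{\infty}_c(T^{\mathrm{int}})$ one has $R_{\tau}(\Delta_g\psi)=K_{\tau}(\Delta_g\psi)-K_0(\Delta_g\psi)=-\psi+\psi=0$ (using uniqueness to see that $K_{\tau}$ is also a left inverse on such $\psi$), which gives $\Delta_{g,y}R_{\tau}(x,y)=0$ directly and lets you run the same elliptic bootstrap in $y$.
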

\begin{proof}
We prove the result for $\tau \geq 1$, the case $\tau \leq -1$ being similar. The expression for $G_{\tau}$ in \cite[Proposition 4.1]{KSaU} shows that for any $f \in L^2_{\delta}(T)$ with $\delta > 1/2$ we have 
$$
G_{\tau} f(x_1,x') = -\sum_{l=1}^{\infty} [T_{\tau+\sqrt{\lambda_l}} T_{\tau-\sqrt{\lambda_l}} \tilde{f}(\,\cdot\,,l)](x_1) \phi_l(x')
$$
with convergence in $L^2_{-\delta}(T)$, where for $\mu \in \mR \smallsetminus \{0\}$ we define 
$$
T_{\mu} v(t) = \mF_{\eta}^{-1}\left\{ \frac{1}{i\eta - \mu} \hat{v}(\eta) \right\}(t), \quad v \in \mSp(\mR).
$$
This implies that for $f \in L^2_c(T)$ one has 
\begin{align*}
K_{\tau} f(x_1,x') &= -\sum_{l=1}^{\infty} e^{-\tau x_1} [T_{\tau+\sqrt{\lambda_l}} T_{\tau-\sqrt{\lambda_l}} (e^{\tau\,\cdot\,} \tilde{f}(\,\cdot\,,l))](x_1) \phi_l(x'), \\
K_0 f(x_1,x') &= -\sum_{l=1}^{\infty} [T_{\sqrt{\lambda_l}} T_{-\sqrt{\lambda_l}} \tilde{f}(\,\cdot\,,l)](x_1) \phi_l(x')
\end{align*}
with convergence in $L^2_{\text{loc}}(T)$. The second identity follows by solving the equation $-\Delta_g u = f$ in $T$, $f \in L^2(T)$, $u|_{\partial T} = 0$, by taking the Fourier transform in $x_1$ and expanding in terms of Dirichlet eigenfunctions in $x'$. Note that $K_0$ is obtained from $K_{\tau}$ by setting $\tau = 0$ formally.

We compute for $v \in L^2_c(\mR)$ 
\begin{multline*}
- e^{-\tau x_1} [T_{\tau+\sqrt{\lambda_l}} T_{\tau-\sqrt{\lambda_l}} (e^{\tau\,\cdot\,} v)](x_1) + T_{\sqrt{\lambda_l}} T_{-\sqrt{\lambda_l}} v(x_1) \\
 = \int_{-\infty}^{\infty} [e^{-\tau(x_1-y_1)} m_{\tau}(x_1-y_1,\sqrt{\lambda_l}) - m_0(x_1-y_1,\sqrt{\lambda_l})] v(y_1) \,dy_1
\end{multline*}
where for $a > 0$ 
$$
m_{\tau}(t,a) = -\frac{1}{2\pi} \int_{-\infty}^{\infty} e^{it\eta} \frac{1}{(i\eta-(\tau+a))(i\eta-(\tau-a))} \,d\eta.
$$
We claim that 
\begin{equation} \label{residue_claim}
e^{-\tau r} m_{\tau}(r,a) - m_0(r,a) = \left\{ \begin{array}{cl} 0, & \tau < a, \\ -\frac{1}{2 a} e^{-ar}, & \tau > a. \end{array} \right.
\end{equation}
If this holds, then using that $\tau \neq \sqrt{\lambda_l}$ by the condition $\tau^2 \notin \text{Spec}(-\Delta_{g_0})$ we obtain 
\begin{align*}
(K_{\tau} - K_0) f(x_1,x') &= -\sum_{l; \sqrt{\lambda_l} < \tau} \int_{-\infty}^{\infty} \frac{1}{2 \sqrt{\lambda_l}} e^{-\sqrt{\lambda_l}(x_1-y_1)} \tilde{f}(y_1,l) \phi_l(x') \,dy_1 \\
 &= \int_T R_{\tau}(x,y) f(y) \,dV(y)
\end{align*}
where $R_{\tau}(x,y)$ is the integral kernel 
$$
R_{\tau}(x,y) = -\sum_{l; \sqrt{\lambda_l} < \tau} \frac{1}{2 \sqrt{\lambda_l}} e^{-\sqrt{\lambda_l}(x_1-y_1)} \phi_l(x') \phi_l(y').
$$
Since the sum is finite, the kernel is smooth in $T \times T$ as required.

It remains to prove \eqref{residue_claim}. Note that for $r \in \mR$ and $a > 0$,
\begin{align*}
e^{-\tau r} m_{\tau}(r,a) &= -\frac{1}{2\pi} \int_{-\infty}^{\infty} \frac{e^{i(\eta+i\tau)r}}{((\eta+i\tau)i-a)((\eta+i\tau)i+a)} \,d\eta \\
 &= \int_{\gamma_{\tau}} F(z) \,dz
\end{align*}
where the last expression is a contour integral over the curve $\gamma_{\tau}(\eta) = \eta + i\tau$ for $\eta \in (-\infty,\infty)$, and 
$$
F(z) = -\frac{1}{2\pi} \frac{e^{irz}}{(iz-a)(iz+a)}.
$$
It follows that 
$$
e^{-\tau r} m_{\tau}(r,a) - m_0(r,a) = \left( \int_{\gamma_{\tau}} - \int_{\gamma_0} \right) F(z) \,dz.
$$
We may interpret the right hand side as a limit of integrals over closed rectangular contours since $F(\pm R + it)$ where $0 \leq t \leq \tau$ decays as $R \to \infty$. Now $F(z)$ is analytic in $\mC \smallsetminus \{\pm ia\}$ with simple poles at $\pm ia$, so \eqref{residue_claim} follows from the residue theorem.
\end{proof}

Since $R_{\tau} u|_{\partial T} = 0$ for any $u$, the preceding result implies that 
$$
K_{\tau}: H^{-1}_c(T) \to H^1_{\text{loc},0}(T).
$$

\begin{definition}
If $\abs{\tau} \geq 1$ and $\tau^2 \notin \text{Spec}(-\Delta_{g_0})$, we define the $\tau$-dependent single layer potential 
$$
S_{\tau} = K_{\tau} \gamma^*: H^{-1/2}(\Gamma) \to H^1_{\text{loc},0}(T).
$$
\end{definition}

The basic properties of $S_{\tau}$ follow immediately from the previous results.

\begin{lemma} \label{claim:layer_basic}
Let $\abs{\tau} \geq 1$ and $\tau^2 \notin \text{Spec}(-\Delta_{g_0})$. If $f \in H^{-1/2}(\Gamma)$ then $u = S_{\tau} f \in H^1_{\text{loc},0}(T)$ satisfies 
\begin{align*}
-\Delta_g u &= 0 \quad \text{in } M_{\pm}, \\
[u]_{\Gamma} &= 0, \\
[\partial_{\nu} u]_{\Gamma} &= f.
\end{align*}
If $f \in H^s(\Gamma)$ for $s \geq -1/2$ then $u|_{M_-} \in H^{s+3/2}(M_-)$ and $u|_{M_+} = \tilde{u}|_{M_+}$ for some $\tilde{u} \in H^{s+3/2}_{\text{loc}}(T) \cap H^1_{\text{loc},0}(T)$. The trace single layer potential satisfies 
$$
\gamma S_{\tau}: H^s(\Gamma) \to H^{s+1}(\Gamma), \quad s \geq -1/2.
$$
One has 
$$
S_{\tau} f(x) = \int_{\Gamma} K_{\tau}(x,y) f(y) \,dS(y)
$$
where the kernel $K_{\tau}(x,y)$ is smooth off the diagonal in $T^{\text{int}} \times T^{\text{int}}$ and $\Delta_g (K_{\tau}(x,\,\cdot\,)) = 0$ in $T^{\text{int}} \smallsetminus \{x\}$.
\end{lemma}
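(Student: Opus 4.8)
The plan is to deduce every assertion from the splitting
\[
S_\tau = K_\tau \gamma^* = (K_0 + R_\tau)\gamma^* = S_0 + R_\tau \gamma^*,
\]
which is legitimate by Lemma \ref{claim:layer_smoothkernel}, together with the already established properties of $S_0$, $K_0$ and $R_\tau$. First I would analyse the correction term $R_\tau \gamma^* f$. Since $\gamma^* f \in H^{-1/2}(\Gamma)$ is a compactly supported distribution on $T^{\text{int}}$ and the kernel $R_\tau(x,y)$ lies in $C^\infty(T \times T)$, the function
\[
R_\tau \gamma^* f(x) = (f, R_\tau(x,\cdot)|_\Gamma)_{L^2(\Gamma)}
\]
is smooth on $T$; it vanishes on $\partial T$, because $R_\tau(x,y)$ vanishes whenever $x \in \partial T$ (the Dirichlet eigenfunctions $\phi_l$ vanish on $\partial M_0$, cf.\ the explicit formula in the proof of Lemma \ref{claim:layer_smoothkernel} and the remark that $R_\tau u|_{\partial T} = 0$ for any $u$); and $-\Delta_g(R_\tau \gamma^* f) = 0$ in all of $T$, as one reads off that same explicit formula, each summand $e^{-\sqrt{\lambda_l}x_1}\phi_l(x')$ being $\Delta_g$-harmonic.

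Granting this, the jump and harmonicity identities for $u = S_\tau f$ follow at once from the corresponding identities for $S_0 f$ in the jump lemma for the standard single layer operator: adding the globally smooth, globally $\Delta_g$-harmonic function $R_\tau \gamma^* f$ neither destroys harmonicity in $M_\pm$, nor creates a jump in the Dirichlet trace, nor changes the jump of the normal derivative, whence $-\Delta_g u = 0$ in $M_\pm$, $[u]_\Gamma = 0$ and $[\partial_\nu u]_\Gamma = f$. For the interior/exterior regularity, if $f \in H^s(\Gamma)$ with $s \geq -1/2$ then the $S_0$ lemma gives $S_0 f|_{M_-} \in H^{s+3/2}(M_-)$ and $S_0 f|_{M_+} = \tilde v|_{M_+}$ with $\tilde v \in H^{s+3/2}_{\text{loc}}(T) \cap H^1_0(T)$; adding $R_\tau \gamma^* f \in C^\infty(T) \cap H^1_{\text{loc},0}(T)$ yields the stated conclusion for $S_\tau$, the only change being that $H^1_0(T)$ is relaxed to $H^1_{\text{loc},0}(T)$ since $K_\tau$ maps only into $H^1_{\text{loc},0}(T)$. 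Likewise $\gamma S_\tau = \gamma S_0 + \gamma R_\tau \gamma^*$ maps $H^s(\Gamma) \to H^{s+1}(\Gamma)$, because $\gamma S_0$ does so by the corollary on the trace single layer potential while $\gamma R_\tau \gamma^*$ maps $H^s(\Gamma)$ into $C^\infty(\Gamma) \subset H^{s+1}(\Gamma)$.

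For the integral kernel I would write $K_\tau(x,y) = K_0(x,y) + R_\tau(x,y)$: by the lemma on the kernel $K_0$ the first term is smooth off the diagonal in $T^{\text{int}} \times T^{\text{int}}$ and satisfies $\Delta_g(K_0(x,\cdot)) = 0$ in $T^{\text{int}} \smallsetminus \{x\}$, while $R_\tau(x,y) \in C^\infty(T \times T)$ is a finite sum of terms $e^{-\sqrt{\lambda_l}(x_1 - y_1)}\phi_l(x')\phi_l(y')$ each of which is $\Delta_g$-harmonic in $y$ throughout $T^{\text{int}}$; hence $K_\tau(x,y)$ is smooth off the diagonal and $\Delta_g(K_\tau(x,\cdot)) = 0$ in $T^{\text{int}} \smallsetminus \{x\}$, and $S_\tau f(x) = \int_\Gamma K_\tau(x,y) f(y)\,dS(y)$ follows exactly as for $S_0$. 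I do not anticipate any genuine obstacle: the lemma is essentially a corollary of Lemma \ref{claim:layer_smoothkernel} and the previously established properties of $S_0$ and $K_0$. The only points needing a little care are making sense of $R_\tau \gamma^* f$ as the pairing of the compactly supported distribution $\gamma^* f$ against the smooth kernel $R_\tau$, and keeping the ``$\text{loc}$'' subscripts and the condition on $\partial T$ straight when transferring statements from the $K_0$ setting to the $K_\tau$ setting.
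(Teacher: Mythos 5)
Your proposal is correct and follows exactly the route the paper intends: the paper gives no written proof of this lemma beyond the remark that it ``follows immediately from the previous results,'' and your decomposition $S_\tau = S_0 + R_\tau\gamma^*$, with $R_\tau\gamma^* f$ smooth, globally $\Delta_g$-harmonic, and vanishing on $\partial T$, is precisely the intended deduction from Lemma \ref{claim:layer_smoothkernel} and the properties of $S_0$ and $K_0$.
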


\section{Boundary integral equation} \label{section_boundaryintegralequation}

We now describe four equivalent problems for characterizing the CGO solutions $u$: a differential equation (DE), integral equation (IE), exterior problem (EP), and boundary integral equation (BE). In this section we assume that $(M,g) \subset \subset (T^{\text{int}},g)$ is a compact manifold with smooth boundary and $T = \mR \times M_0$, $g = e \oplus g_0$, and $(M_0,g_0)$ is any compact $(n-1)$-dimensional manifold with smooth boundary. We use the notations in Section \ref{section_boundarylayerpotentials}.

As a first step, we quote the basic existence and uniqueness result concerning $G_{\tau}$ from \cite[Proposition 4.3 and subsequent remark]{KSaU}.

\begin{prop} \label{prop:normestimate_uniqueness_2}
Let $\delta > 1/2$ and $\lambda \neq 0$, and suppose that $q \in L^{\infty}(T)$ is compactly supported. There exists $\tau_0 \geq 1$ (if $q=0$ then $\tau_0 = 1$) such that whenever 
\begin{equation*}
\abs{\tau} \geq \tau_0 \quad \text{and} \quad \tau^2 \notin \text{Spec}(-\Delta_{g_0}),
\end{equation*}
then for any $f = f_1 + f_2$ where $f_1 \in L^2_{\delta}(T)$, $f_2 \in L^2_{-\delta}(T)$, and 
\begin{equation*}
\text{$\mF_{x_1} f_2(\,\cdot\,,x')$ has support in $\{ \abs{\xi_1} \geq \abs{\lambda} \}$ for a.e.~$x' \in M_0$,}
\end{equation*}
there is a unique solution $w \in H^1_{-\infty,0}(T)$ of the equation 
\begin{equation*}
e^{\tau x_1} (-\Delta + q) e^{-\tau x_1} w = f \quad \text{in } T.
\end{equation*}
Further, $w \in H^1_{-\delta,0}(T) \cap H^2_{-\delta}(T)$, and one has $w = G_{\tau} v$ where 
\begin{equation*}
(\id + qG_{\tau}) v = f, \qquad \norm{(\id+qG_{\tau})^{-1}}_{L^2_{\delta} \to L^2_{\delta}} \leq 2.
\end{equation*}
Finally, $w$ satisfies the estimates 
\begin{equation*}
\norm{w}_{H^s_{-\delta}(T)} \leq C \abs{\tau}^{s-1} \left[ \norm{f_1}_{L^2_{\delta}(T)} + \norm{f_2}_{L^2_{-\delta}(T)} \right], \quad 0 \leq s \leq 2,
\end{equation*}
with $C$ independent of $\tau$ and $f_1$, $f_2$.
\end{prop}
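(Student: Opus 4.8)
\medskip

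\noindent\emph{Proof proposal.}\, The plan is to reduce every assertion to one-dimensional weighted multiplier estimates by means of the Dirichlet eigenfunction expansion of Section~\ref{section_boundarylayerpotentials}, and then to obtain existence, the identity $w=G_\tau v$, the bound on $(\id+qG_\tau)^{-1}$, and uniqueness by a Neumann series argument. As in the proof of Lemma~\ref{claim:layer_smoothkernel}, a direct computation shows that $e^{\tau x_1}(-\Delta_g)e^{-\tau x_1}$ acts on $e^{i\eta x_1}\phi_l(x')$ by multiplication by $(\eta+i\tau)^2+\lambda_l$; hence for $h\in L^2_c(T)$ one has
\[
\widetilde{G_\tau h}(\eta,l)=\frac{\widetilde{\hat h}(\eta,l)}{(\eta+i\tau)^2+\lambda_l},
\]
the tilde denoting the partial Fourier coefficient in $x'$ and the hat the Fourier transform in $x_1$; and since the $\phi_l$ vanish on $\partial M_0$ we automatically get $G_\tau h|_{\partial T}=0$. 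By the Parseval identity in $x'$, all mapping and norm properties of $G_\tau$ on $T$ follow from the corresponding properties of the one-variable operators $m_{\tau,l}(D_{x_1})$ with symbol $m_{\tau,l}(\eta)=((\eta+i\tau)^2+\lambda_l)^{-1}$, \emph{uniformly in} $l$ (writing $\mu_l:=\sqrt{\lambda_l}\ge\sqrt{\lambda_1}$).

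The heart of the argument is the estimate $\norm{m_{\tau,l}(D)}_{L^2_\delta(\mR)\to L^2_{-\delta}(\mR)}\le C_\delta\abs{\tau}^{-1}$, with $C_\delta$ independent of $l$ and of $\tau$ (for $\abs{\tau}\ge2$, $\tau^2\notin\text{Spec}(-\Delta_{g_0})$). I would prove it via partial fractions,
\[
m_{\tau,l}(\eta)=\frac{1}{2i\mu_l}\Bigl(\frac{1}{\eta+i(\tau-\mu_l)}-\frac{1}{\eta+i(\tau+\mu_l)}\Bigr),
\]
noting that each factor $(\eta+i\gamma)^{-1}$, $\gamma\ne0$, is the symbol of convolution with a one-sided exponential kernel $k_\gamma$ satisfying $\abs{k_\gamma(t)}\le\mathbf{1}_{\{-\gamma t>0\}}(t)$; convolution with $\mathbf{1}_{\{-\gamma t>0\}}$ is one of the Hardy operators $h\mapsto\int_x^\infty h$ or $h\mapsto\int_{-\infty}^x h$, which is bounded $L^2_\delta(\mR)\to L^2_{-\delta}(\mR)$ exactly because $\delta>1/2$, so each factor is bounded $L^2_\delta\to L^2_{-\delta}$ uniformly in $\gamma$; when moreover $\abs{\gamma}\ge1$ one has $\norm{k_\gamma}_{L^1}=\abs{\gamma}^{-1}$, giving an $L^2_\delta\to L^2_\delta$ bound $\le C_\delta\abs{\gamma}^{-1}$. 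Combining these two bounds with the prefactor $(2\mu_l)^{-1}$ and splitting into the ranges $\mu_l\le\tau$ and $\mu_l>\tau$, one verifies the bound $C_\delta\abs{\tau}^{-1}$ in each case; the delicate case, $\mu_l$ close to $\tau$, is handled by keeping the (uniform) Hardy bound on the first factor and using $(2\mu_l)^{-1}\le C\abs{\tau}^{-1}$ — here $\tau^2\notin\text{Spec}$ only serves to ensure $\tau-\mu_l\ne0$, no quantitative gap being needed. The Sobolev bounds $\norm{G_\tau h}_{H^s_{-\delta}(T)}\le C\abs{\tau}^{s-1}\norm{h}_{L^2_\delta(T)}$ for $0\le s\le1$ follow from the same one-dimensional analysis once the factors coming from $\partial_{x_1}$ (which acts by $i\eta$) and from $-\Delta_{g_0}$ (which acts by $\mu_l^2$) are redistributed onto the resolvent factors $(\eta+i(\tau\mp\mu_l))^{-1}$; the case $s=2$ I would deduce by interior and boundary elliptic regularity for $-\Delta_g$ applied to $-\Delta_g(G_\tau h)=h-2\tau\partial_{x_1}(G_\tau h)+\tau^2G_\tau h$ together with the $s=0,1$ bounds, and $1<s<2$ by interpolation. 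For the term $f_2\in L^2_{-\delta}(T)$ with $\mF_{x_1}f_2(\cdot,x')$ supported in $\{\abs{\xi_1}\ge\abs{\lambda}\}$, one uses that $\abs{(\eta+i\tau)^2+\lambda_l}\ge2\abs{\tau}\abs{\eta}\ge2\abs{\tau}\abs{\lambda}$ there, so one stays away from the unique resonant frequency $\eta=0$ of $m_{\tau,l}$; removing the modulations $e^{\pm i\abs{\lambda}x_1}$ and working in the resulting Hardy subspaces, the same analysis gives $\norm{G_\tau f_2}_{H^s_{-\delta}(T)}\le C_\lambda\abs{\tau}^{s-1}\norm{f_2}_{L^2_{-\delta}(T)}$.

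Granting these estimates, the remaining steps are soft. Since $q\in L^\infty(T)$ is compactly supported, multiplication by $q$ is bounded $L^2_{-\delta}(T)\to L^2_\delta(T)$, so $qG_\tau$ is bounded on $L^2_\delta(T)$ with norm $\le C_{q,\delta}\abs{\tau}^{-1}$; taking $\tau_0:=\max(2,2C_{q,\delta})$ (and $\tau_0=1$ when $q=0$), this norm is $\le1/2$ for $\abs{\tau}\ge\tau_0$, so $\id+qG_\tau$ is invertible on $L^2_\delta(T)$ by Neumann series with $\norm{(\id+qG_\tau)^{-1}}_{L^2_\delta\to L^2_\delta}\le2$. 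Given $f=f_1+f_2$ as in the statement, I would set $v_1:=(\id+qG_\tau)^{-1}(f_1-qG_\tau f_2)\in L^2_\delta(T)$ (note $qG_\tau f_2\in L^2_\delta(T)$ by the $f_2$-estimate and the compact support of $q$), $v:=v_1+f_2$, and $w:=G_\tau v=G_\tau v_1+G_\tau f_2$; then $w\in H^2_{-\delta}(T)\cap H^1_{-\delta,0}(T)$ by the estimates above, and using $e^{\tau x_1}(-\Delta_g)e^{-\tau x_1}G_\tau=\id$ one computes $e^{\tau x_1}(-\Delta_g+q)e^{-\tau x_1}w=v+qw=(\id+qG_\tau)v=f$. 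Chaining the bounds for $G_\tau$, for $(\id+qG_\tau)^{-1}$, and for multiplication by $q$ yields $\norm{v_1}_{L^2_\delta}\le2\norm{f_1}_{L^2_\delta}+C\abs{\tau}^{-1}\norm{f_2}_{L^2_{-\delta}}$ and hence $\norm{w}_{H^s_{-\delta}(T)}\le C\abs{\tau}^{s-1}[\norm{f_1}_{L^2_\delta(T)}+\norm{f_2}_{L^2_{-\delta}(T)}]$ with $C$ independent of $\tau$.

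For uniqueness in $H^1_{-\infty,0}(T)$ I would argue in two steps. First, for the free operator: if $u\in H^1_{\delta',0}(T)$ for some $\delta'\in\mR$ and $e^{\tau x_1}(-\Delta_g)e^{-\tau x_1}u=0$, then testing against $\psi(x_1)\phi_l(x')$ and taking the tempered Fourier transform in $x_1$ gives $((\eta+i\tau)^2+\lambda_l)\,\widetilde{\hat u}(\eta,l)=0$; since $(\eta+i\tau)^2+\lambda_l$ has no real zeros (as $\abs{\tau}\ge1$), division by it is a smooth bounded multiplier with bounded derivatives, so $\widetilde{\hat u}(\eta,l)=0$ for all $l$ and $u=0$. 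Second, if $w\in H^1_{-\infty,0}(T)$ solves $e^{\tau x_1}(-\Delta_g+q)e^{-\tau x_1}w=0$, then $v:=-qw\in L^2_c(T)\subset L^2_\delta(T)$ and both $w$ and $G_\tau v$ solve $e^{\tau x_1}(-\Delta_g)e^{-\tau x_1}(\cdot)=v$ in $H^1_{-\infty,0}(T)$; by the free uniqueness $w=G_\tau v$, hence $(\id+qG_\tau)v=v+qG_\tau v=v+qw=-qw+qw=0$, and invertibility of $\id+qG_\tau$ on $L^2_\delta(T)$ forces $v=0$, so $w=G_\tau v=0$. The step I expect to be the main obstacle is the weighted one-dimensional estimate with constant independent of $\tau$ — passing through the near-resonant modes $\mu_l\approx\tau$ with no quantitative gap to $\text{Spec}(-\Delta_{g_0})$ while simultaneously controlling the slowly decaying, frequency-localized data $f_2$; that the Hardy inequality holds exactly for $\delta>1/2$ is what makes both possible, and is also the reason the frequency-support hypothesis on $f_2$ cannot be removed.
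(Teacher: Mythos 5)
The paper does not actually prove this proposition --- it is quoted from \cite[Proposition 4.3 and subsequent remark]{KSaU} --- and your argument reproduces essentially the method of that reference: partial Fourier expansion in the Dirichlet eigenbasis of $(M_0,g_0)$, factorization of the one-dimensional resolvent $((\eta+i\tau)^2+\lambda_l)^{-1}$ into convolutions with one-sided exponentials (the operators $T_{\tau\pm\sqrt{\lambda_l}}$ that reappear in the proof of Lemma~\ref{claim:layer_smoothkernel}), the uniform Hardy-type bound for $\delta>1/2$ to pass through the near-resonant modes $\sqrt{\lambda_l}\approx\tau$, and a Neumann series in $qG_\tau$ together with the standard two-step uniqueness argument. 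The only step stated too loosely is the bound $\norm{G_\tau f_2}_{H^s_{-\delta}(T)}\le C_\lambda\abs{\tau}^{s-1}\norm{f_2}_{L^2_{-\delta}(T)}$: rather than ``working in Hardy subspaces,'' one should note that on $\{\abs{\eta}\ge\abs{\lambda}\}$ the symbol and its first $\eta$-derivative are $O_\lambda(\abs{\tau}^{-1})$ uniformly in $l$, and then invoke a weighted multiplier lemma (one derivative suffices since one may assume $1/2<\delta<1$) to get the $L^2_{-\delta}\to L^2_{-\delta}$ bound.
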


In the following result we extend $q \in L^{\infty}(M)$ by zero into $T$, and let $\tau_0$ be as in Proposition \ref{prop:normestimate_uniqueness_2}. The result considers CGO solutions of the form 
$$
u = u_0 + e^{-\tau x_1} r
$$
where $u_0$ is any harmonic function in $H^2_{\text{loc}}(T)$, and $r \in H^1_{-\infty,0}(T)$. We will only fix the choice of the free solution $u_0$ in the next section.

\begin{prop} \label{prop:equivalent_problems}
Let $q \in L^{\infty}(M)$ be such that $0$ is not a Dirichlet eigenvalue of $-\Delta_g + q$ in $M$, and let $\abs{\tau} \geq \tau_0$ and $\tau^2 \notin \text{Spec}(-\Delta_{g_0})$. Further, let $u_0 \in H^2_{\text{loc}}(T)$ be such that $\Delta_g u_0 = 0$ in $T$. Consider the following problems:
\begin{align*}
\text{(DE)}\ & \left\{ \begin{array}{l}
(-\Delta_g + q) u = 0 \text{ in } T \\[2pt] 
e^{\tau x_1} (u - u_0) \in H^1_{-\infty,0}(T),
\end{array} \right. \\ 
\text{(IE)\,}\ & \left\{ \begin{array}{l} 
u + K_{\tau}(qu) = u_0 \text{ in } T \\[1pt] 
u \in H^2_{\mathrm{loc}}(T),
\end{array} \right. \\ 
\text{(EP)}\ & \left\{ \begin{array}{rl} 
\text{i)} & \Delta_g u = 0 \text{ in } T \smallsetminus M \\[2pt]
\text{ii)} & u = \tilde{u}|_{T \smallsetminus M} \text{ for some } \tilde{u} \in H^2_{\text{loc}}(T) \\[2pt]
\text{iii)} & e^{\tau x_1} (u - u_0)|_{T \smallsetminus M} = \tilde{r}|_{T \smallsetminus M} \text{ for some } \tilde{r} \in H^1_{-\infty,0}(T) \\[2pt]
\text{iv)} & (\partial_{\nu} u)_+ = \Lambda_{g,q}(\gamma_+ u) \text{ on } \Gamma,
\end{array} \right. \\ 
\text{(BE)}\ & \left\{ \begin{array}{l} 
(\id + \gamma S_{\tau} (\Lambda_{g,q} - \Lambda_{g,0})) f = u_0 \text{ on } \Gamma \\[2pt] 
f \in H^{3/2}(\Gamma). 
\end{array} \right.
\end{align*}
Each of these problems has a unique solution. Further, these problems are equivalent in the sense that $u$ solves (DE) iff $u$ solves (IE), if $u$ solves (DE) then $u|_{T \smallsetminus M}$ solves (EP), if $u$ solves (EP) then there is a solution $\tilde{u}$ of (DE) with $\tilde{u}|_{T \smallsetminus M} = u$, if $u$ solves (DE) then $f = u|_{\Gamma}$ solves (BE), and finally if $f$ solves (BE) then there is a solution $u$ of (DE) with $u|_{\Gamma} = f$.
\end{prop}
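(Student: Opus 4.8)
The plan is to set up the chain of implications
\[
\text{(IE)} \Longleftrightarrow \text{(DE)} \Longleftrightarrow \text{(EP)}, \qquad \text{(DE)} \Longrightarrow \text{(BE)} \Longrightarrow \text{(EP)},
\]
and to deduce existence and uniqueness for all four problems from existence and uniqueness for (DE), which will come straight out of Proposition \ref{prop:normestimate_uniqueness_2}. First I would treat \textbf{(IE) $\Leftrightarrow$ (DE) and well-posedness}. Writing $u = u_0 + e^{-\tau x_1} r$ with $r = e^{\tau x_1}(u-u_0)$ and using $\Delta_g u_0 = 0$, the equation $(-\Delta_g+q)u=0$ becomes $e^{\tau x_1}(-\Delta_g+q)e^{-\tau x_1} r = -q\,e^{\tau x_1} u_0$. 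Since $q$ is supported in $M$ and $u_0 \in H^2_{\mathrm{loc}}(T)$, the right side lies in $L^2_c(T) \subset L^2_{\delta}(T)$, so Proposition \ref{prop:normestimate_uniqueness_2} (with $f_1 = -q\,e^{\tau x_1}u_0$, $f_2 = 0$, the support hypothesis being vacuous) produces a unique $r \in H^1_{-\infty,0}(T)$, which moreover lies in $H^2_{-\delta}(T)$; this gives a unique solution of (DE) with the required $H^2_{\mathrm{loc}}(T)$ regularity, and uniqueness for (DE) follows since a difference of two solutions, conjugated by $e^{\tau x_1}$, solves the homogeneous conjugated equation in $H^1_{-\infty,0}(T)$ and hence vanishes. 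Applying $G_{\tau}$ (resp.\ $K_{\tau}$) to the equation for $r$, using that $r$ is the unique $H^1_{-\infty,0}(T)$-solution of $e^{\tau x_1}(-\Delta_g)e^{-\tau x_1} r = -q\,e^{\tau x_1} u$, yields $e^{-\tau x_1}r = -K_{\tau}(qu)$, i.e.\ (IE); conversely, applying $-\Delta_g$ to (IE) and using $-\Delta_g K_{\tau} = \id$ on $L^2_c(T)$ recovers (DE). So (DE) $\Leftrightarrow$ (IE).

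Next, \textbf{(DE) $\Leftrightarrow$ (EP)}. If $u$ solves (DE) then on $M_+ = T\smallsetminus M$ it is harmonic (as $q=0$ there), it is the restriction of $u \in H^2_{\mathrm{loc}}(T)$, $e^{\tau x_1}(u-u_0)$ is the restriction of an element of $H^1_{-\infty,0}(T)$, and because $u\in H^2_{\mathrm{loc}}(T)$ its weak exterior normal derivative on $\Gamma$ equals the strong normal derivative of $u|_M$, which is $\Lambda_{g,q}(u|_{\Gamma})$ since $u|_M$ solves the Schr\"odinger equation; hence $u|_{M_+}$ solves (EP). Conversely, given $v$ solving (EP), define $\tilde u$ on $M$ as the unique $H^2(M)$ solution of $(-\Delta_g+q)\tilde u = 0$, $\tilde u|_{\Gamma} = \gamma_+ v$ (unique since $0$ is not a Dirichlet eigenvalue), and set $\tilde u = v$ on $M_+$. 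Then $[\tilde u]_{\Gamma}=0$, and condition iv) of (EP) together with $\partial_{\nu}(\tilde u|_M)|_{\Gamma} = \Lambda_{g,q}(\gamma_+ v)$ forces the normal derivative traces from the two sides to agree, so $\Delta_g \tilde u = q\tilde u \mathbf 1_M \in L^2_c(T)$ holds distributionally across $\Gamma$, whence $\tilde u \in H^2_{\mathrm{loc}}(T)$ by elliptic regularity and $\tilde u$ satisfies the equation in (DE). For the decay condition, conditions ii)--iii) give $\tilde r \in H^1_{-\infty,0}(T)$ agreeing with $e^{\tau x_1}(v-u_0)$ on $M_+$; then $e^{\tau x_1}(\tilde u-u_0) - \tilde r$ vanishes on $M_+$ and has zero trace on $\Gamma$, so it is compactly supported and lies in $H^1_{\delta,0}(T)$ for every $\delta$, giving $e^{\tau x_1}(\tilde u-u_0) \in H^1_{-\infty,0}(T)$. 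Thus $\tilde u$ solves (DE), and uniqueness for (EP) follows by extending two solutions and using uniqueness for (DE).

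Then, \textbf{(DE) $\Rightarrow$ (BE)}, which is the conceptual core (mirroring Nachman). Let $u$ solve (DE) and put $f = u|_{\Gamma} \in H^{3/2}(\Gamma)$. Fix $x\in M_+$; by Lemma \ref{claim:layer_basic} the kernel $K_{\tau}(x,\cdot)$ is smooth and $g$-harmonic on a neighbourhood of the compact set $M$. Since $\Delta_g u = qu$ in $M$, Green's second identity in $M$ gives $\int_M K_{\tau}(x,y)q(y)u(y)\,dV(y) = S_{\tau}(\Lambda_{g,q}f)(x) - D_{\tau}f(x)$, where $D_{\tau}f(x) = \int_{\Gamma}\partial_{\nu,y}K_{\tau}(x,y)f(y)\,dS(y)$ and I used $\partial_{\nu}(u|_M)|_{\Gamma} = \Lambda_{g,q}f$; combining with (IE) evaluated at $x$ (and $\operatorname{supp}q\subset M$) yields $u(x) = u_0(x) - S_{\tau}(\Lambda_{g,q}f)(x) + D_{\tau}f(x)$. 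Applying Green's identity once more to $K_{\tau}(x,\cdot)$ and the harmonic extension $w_0 \in H^2(M)$ of $f$ (both $g$-harmonic in $M$) gives the purely exterior relation $D_{\tau}f(x) = S_{\tau}(\Lambda_{g,0}f)(x)$. Hence $u = u_0 - S_{\tau}((\Lambda_{g,q}-\Lambda_{g,0})f)$ on $M_+$; taking the trace on $\Gamma$ from $M_+$ and using $\gamma_+ u = f$ together with the continuity of $\gamma S_{\tau}$, one obtains $(\id + \gamma S_{\tau}(\Lambda_{g,q}-\Lambda_{g,0}))f = u_0$ on $\Gamma$, i.e.\ (BE). For the converse \textbf{(BE) $\Rightarrow$ (DE)}, given $f\in H^{3/2}(\Gamma)$ solving (BE) set $h = (\Lambda_{g,q}-\Lambda_{g,0})f \in H^{1/2}(\Gamma)$ and $v = u_0 - S_{\tau}(h)$ on $M_+$: Lemma \ref{claim:layer_basic} gives i)--ii) of (EP), iii) follows from $e^{\tau x_1}S_{\tau}(h) = G_{\tau}(e^{\tau x_1}\gamma^* h) \in H^1_{-\infty,0}(T)$, and iv) follows from $\gamma_+ v = \gamma u_0 - \gamma S_{\tau}(h) = f$ (by (BE)) and, using the jump relation $[\partial_{\nu}S_{\tau}(h)]_{\Gamma} = h$, the harmonicity of $u_0|_M$ and $S_{\tau}(h)|_M$, and (BE) again, $(\partial_{\nu}v)_+ = \Lambda_{g,0}(\gamma u_0) - \Lambda_{g,0}(\gamma S_{\tau}h) + h = \Lambda_{g,0}f + h = \Lambda_{g,q}f = \Lambda_{g,q}(\gamma_+ v)$. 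So $v$ solves (EP), hence extends to a solution $u$ of (DE) with $u|_{\Gamma} = \gamma_+ v = f$; uniqueness for (BE) follows since two solutions extend to (DE) solutions with those traces, which coincide.

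I expect the main obstacle to be twofold. The substantive point is the (DE) $\Rightarrow$ (BE) step, namely carrying out the two Green's identities with the Faddeev kernel $K_{\tau}$ and the auxiliary harmonic extension $w_0$ of the \emph{boundary trace} $f$ (rather than of $u_0$), which is precisely what makes the double-layer term cancel and leaves only the single-layer operator $S_{\tau}$ applied to $(\Lambda_{g,q}-\Lambda_{g,0})f$. The technical nuisances are the careful bookkeeping of interior versus exterior traces, the weak normal-derivative and jump relations from Section \ref{section_boundarylayerpotentials}, and the gluing arguments that must land the glued functions in $H^2_{\mathrm{loc}}(T)$ and in the weighted spaces $H^1_{-\infty,0}(T)$; in particular one must check that $e^{\tau x_1}S_{\tau}(h) \in H^1_{-\infty,0}(T)$, which uses the mapping properties of $G_{\tau}$ on compactly supported $H^{-1}$ data rather than merely on $L^2_{\delta}(T)$.
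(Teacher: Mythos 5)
Your chain of implications and almost all of the individual arguments coincide with the paper's proof: well-posedness of (DE) via Proposition \ref{prop:normestimate_uniqueness_2}, the conjugation argument for (DE) $\Leftrightarrow$ (IE), the gluing argument for (EP) $\Leftrightarrow$ (DE), and the Green's identity with the kernel $K_{\tau}(x,\cdot)$ for (DE) $\Rightarrow$ (BE). (Your replacement of the double layer $D_{\tau}f$ by $S_{\tau}\Lambda_{g,0}f$ via a second Green's identity with the harmonic extension of $f$ is the same computation the paper performs using the symmetry of $\Lambda_{g,0}$.)

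There is, however, one genuine gap, and it sits exactly where the paper takes the most care: the verification of (EP) iii) in the step (BE) $\Rightarrow$ (EP). You justify $e^{\tau x_1}S_{\tau}h \in H^1_{-\infty,0}(T)$ by writing $e^{\tau x_1}S_{\tau}h = G_{\tau}(e^{\tau x_1}\gamma^* h)$ and, in your closing paragraph, by appealing to ``mapping properties of $G_{\tau}$ on compactly supported $H^{-1}$ data.'' No such mapping properties are available: $G_{\tau}$ is only constructed (in \cite{KSaU} and Proposition \ref{prop:normestimate_uniqueness_2}) on $L^2_{\delta}(T)$ plus the frequency-localized $L^2_{-\delta}$ piece, and $\gamma^* h$ is a surface measure, not in $L^2_c(T)$; the paper explicitly states that $G_{\tau}$ has not been shown to act on negative-order Sobolev spaces, so the identity you rely on is only formal. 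The actual argument proceeds differently: one builds an explicit extension $w$ of $e^{\tau x_1}S_{\tau}h|_{T\smallsetminus M}$ into $M$ of the form $e^{\tau x_1}(P_0(\gamma S_{\tau}h)+F)$, where $F \in H^2(M)$ is chosen with $F|_{\Gamma}=0$ and $\partial_{\nu}F|_{\Gamma} = (\partial_{\nu}S_{\tau}h)_+ - \Lambda_{g,0}(\gamma S_{\tau}h)$ so that the jumps cancel and $w \in H^2_{\text{loc}}(T)$; then $e^{\tau x_1}(-\Delta_g)e^{-\tau x_1}w = \psi$ with $\psi \in L^2_c(T)$ supported in $M$, and the uniqueness statement in Proposition \ref{prop:normestimate_uniqueness_2} identifies $w = G_{\tau}\psi \in H^1_{-\infty,0}(T)$. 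This interior-extension device is the missing idea; without it (or some substitute extension of $G_{\tau}$ to distributional data, which would itself require proof) your verification of (EP) iii) does not close.
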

\begin{proof}
The function $u = u_0 + e^{-\tau x_1} r$ solves $(-\Delta_g + q)u = 0$ in $T$ if and only if  
$$
e^{\tau x_1} (-\Delta_g + q) e^{-\tau x_1} r = - e^{\tau x_1} q u_0 \quad \text{in } T.
$$
The right hand side is in $L^2_c(T)$, so by Proposition \ref{prop:normestimate_uniqueness_2} there is a unique solution $r \in H^1_{-\infty,0}(T)$. This proves that (DE) has a unique solution. It remains to prove that all four problems are equivalent in the sense described above.

(DE) $\implies$ (IE): Assume $u$ solves (DE). Then $u = u_0 + e^{-\tau x_1} r$ where $r \in H^1_{-\infty,0}(T)$, and 
$$
e^{\tau x_1} (-\Delta_g + q) e^{-\tau x_1} r = - e^{\tau x_1} q u_0 \quad \text{in } T.
$$
By Proposition \ref{prop:normestimate_uniqueness_2} we have $r = G_{\tau} v \in H^2_{\text{loc}}(T)$ where $v$ satisfies 
$$
v + q r = -e^{\tau x_1} q u_0.
$$
Since $q$ is compactly supported in $T$ also $v = -q(r+e^{\tau x_1} u_0)$ is compactly supported. Thus we may apply $G_{\tau}$ to both sides of the last identity to obtain 
\begin{equation*}
r + G_{\tau}(qr) = -G_{\tau}(e^{\tau x_1} q u_0).
\end{equation*}
Multiplying by $e^{-\tau x_1}$ and adding $u_0$ to both sides gives (IE).

(IE) $\implies$ (DE): Assume $u$ solves (IE). Then the function $r = e^{\tau x_1}(u-u_0)$ satisfies 
\begin{equation} \label{equiv_r_eq}
r = -G_{\tau}(e^{\tau x_1} q u).
\end{equation}
This shows that $r \in H^1_{-\infty,0}(T)$, and (DE) follows by applying $-\Delta_g$ to both sides of (IE).

(DE) $\implies$ (EP): Let $\tilde{u}$ solve (DE), and define $u = \tilde{u}|_{T \smallsetminus M}$. Clearly properties i), ii) and iii) of (EP) are valid. We need to show iv). Since $\tilde{u}$ solves the equation $(-\Delta_g + q)\tilde{u} = 0$ in $M$, we have 
\begin{equation*}
(\partial_{\nu} u)_+ = \partial_{\nu} \tilde{u}|_{\Gamma} = \Lambda_{g,q} (\tilde{u}|_{\Gamma}) = \Lambda_{g,q} (\gamma_+ u).
\end{equation*}

(EP) $\implies$ (DE): Suppose $u$ solves (EP). Define $v \in H^2(M)$ as the unique solution of the equation $(-\Delta_g + q)v = 0$ in $M$ with $v|_{\Gamma} = \gamma_+ u|_{\Gamma}$, and define 
\begin{equation*}
\tilde{u}(x) = \left\{ \begin{array}{ll} v(x), & x \in M, \\ u(x), & x \in T \smallsetminus M. \end{array} \right.
\end{equation*}
Then $\gamma_- \tilde{u}|_{\Gamma} = \gamma_+ \tilde{u}|_{\Gamma}$ and 
\begin{equation*}
(\partial_{\nu} \tilde{u})_-|_{\Gamma} = \Lambda_{g,q} (\gamma_+ u|_{\Gamma}) = (\partial_{\nu} \tilde{u})_+|_{\Gamma}
\end{equation*}
by (EP) iv). It follows that $\tilde{u} \in H^2_{\text{loc}}(T)$ and $(-\Delta_g + q)\tilde{u} = 0$ in $T$. Further, $e^{\tau x_1} (\tilde{u} - u_0) \in H^1_{-\infty,0}(T)$ by (EP) iii).

(DE) $\implies$ (BE): Let $u$ solve (DE), and let $f = u|_{\Gamma}$. We fix a point $x \in T^{\text{int}} \smallsetminus M$ and let $v(y) = K_{\tau}(x,y)$ where $y \in M$. This is a smooth function in $M$ by Lemma \ref{claim:layer_basic}.

Now Green's theorem implies 
\begin{equation*}
\int_{\Gamma} (u \partial_{\nu} v - v \partial_{\nu} u) \,dS = \int_M (u \Delta_g v - v \Delta_g u) \,dV.
\end{equation*}
By (DE) we have $\Delta_g u = qu$ and $\partial_{\nu} u|_{\Gamma} = \Lambda_{g,q} f$. Using the properties in Lemma \ref{claim:layer_basic} we obtain 
\begin{equation*}
\int_{\Gamma} u \partial_{\nu} v \,dS - S_{\tau} \Lambda_{g,q} f(x) = - K_{\tau} (qu)(x),
\end{equation*}
which is valid for $x \in T^{\text{int}} \smallsetminus M$. The function $v$ is harmonic in $M$, hence $\partial_{\nu} v|_{\Gamma} = \Lambda_{g,0}(v|_{\Gamma})$. The symmetry of $\Lambda_{g,0}$ implies 
\begin{equation*}
\int_{\Gamma} u \partial_{\nu} v \,dS = \int_{\Gamma} u \Lambda_{g,0}(v|_{\Gamma}) \,dS = \int_{\Gamma} \Lambda_{g,0} (u|_{\Gamma}) v \,dS = S_{\tau} \Lambda_{g,0} f(x).
\end{equation*}
We obtain 
\begin{equation} \label{stau_difference_ktau}
S_{\tau}(\Lambda_{g,q} - \Lambda_{g,0}) f = K_{\tau}(qu) \quad \text{in } T^{\text{int}} \smallsetminus M.
\end{equation}
Adding $u$ to both sides, using the fact that $u$ solves (IE), and taking traces on $\Gamma$ gives (BE).

(BE) $\implies$ (EP): Let $f$ solve (BE). We define a function $\tilde{u} \in H^1_{\text{loc}}(T)$ by 
\begin{equation*}
\tilde{u} = u_0 - S_{\tau} (\Lambda_{g,q}-\Lambda_{g,0}) f.
\end{equation*}
This function is harmonic in $T \smallsetminus \Gamma$ by Lemma \ref{claim:layer_basic}, and $\tilde{u}|_{\Gamma} = f$ by using (BE). The jump relation for $S_{\tau}$ implies that on $\Gamma$ 
\begin{equation*}
(\partial_{\nu} \tilde{u})_- - (\partial_{\nu} \tilde{u})_+ = -(\Lambda_{g,q}-\Lambda_{g,0})f.
\end{equation*}
But $(\partial_{\nu} \tilde{u})_- = \Lambda_{g,0}f$, so we have $(\partial_{\nu} \tilde{u})_+ = \Lambda_{g,q} (\gamma_+ \tilde{u})$. Therefore $\tilde{u}|_{T \smallsetminus M}$ satisfies (EP) i) and iv). Also (EP) ii) is valid by mapping properties of $S_{\tau}$.

To prove (EP) iii) it is sufficient to show that for any $h \in H^{1/2}(\Gamma)$, 
\begin{equation} \label{etaux1_Stau_property}
e^{\tau x_1} S_{\tau} h|_{T \smallsetminus M} = w|_{T \smallsetminus M} \quad \text{for some } w \in H^1_{-\infty,0}(T).
\end{equation}
Formally one has $e^{\tau x_1} S_{\tau} h = G_{\tau} e^{\tau x_1} \gamma^* h$ where $G_{\tau}$ maps $L^2_c(T)$ to $H^1_{-\infty,0}(T)$. However, we have not proved that $G_{\tau}$ has good mapping properties on negative order Sobolev spaces. Thus, the proof proceeds differently and involves an extension $w$ of $e^{\tau x_1} S_{\tau} h|_{T \smallsetminus M}$ into $T$ such that $w = G_{\tau} \psi$ for some $\psi \in L^2_c(T)$. This will imply \eqref{etaux1_Stau_property} by the mapping properties of $G_{\tau}$.

Define 
$$
w(x) = \left\{ \begin{array}{ll} e^{\tau x_1}(P_0(\gamma S_{\tau} h) + F), & x \in M, \\ e^{\tau x_1} S_{\tau} h, & x \in T \smallsetminus M \end{array} \right.
$$
where $P_0: H^{3/2}(\Gamma) \to H^2(M)$ is the Poisson operator mapping $h_0$ to the function $v_0$ with $-\Delta_g v_0 = 0$ in $M$ and $v_0|_{\Gamma} = h_0$, and $F \in H^2(M)$ is a function chosen so that $e^{-\tau x_1} w \in H^2_{\text{loc}}(T)$. Clearly we need that $F|_{\Gamma} = 0$, and since 
$$
[\partial_{\nu} (e^{-\tau x_1} w)]_{\Gamma} = \Lambda_{g,0}(\gamma S_{\tau} h) + \partial_{\nu} F|_{\Gamma} - (\partial_{\nu} S_{\tau} h)_+
$$
we also require that $\partial_{\nu} F|_{\Gamma} = (\partial_{\nu} S_{\tau} h)_+ - \Lambda_{g,0}(\gamma S_{\tau} h) \in H^{1/2}(\Gamma)$. We can take $F$ to be any function in $H^2(M)$ with this Cauchy data, and then $e^{-\tau x_1} w$ and also $w$ is in $H^2_{\text{loc}}(T)$.

We now observe that 
$$
e^{\tau x_1} (-\Delta_g) e^{-\tau x_1} w(x) = \left\{ \begin{array}{ll} - e^{\tau x_1} \Delta_g F, & x \in M, \\ 0, & x \in T \smallsetminus M. \end{array} \right.
$$
Since $w \in H^2_{\text{loc}}(T)$ this implies that $e^{\tau x_1} (-\Delta_g) e^{-\tau x_1} w = \psi$ where $\psi \in L^2_c(T)$. Consequently $w = G_{\tau} \psi \in H^1_{-\infty,0}(T)$ and we have proved \eqref{etaux1_Stau_property}.
\end{proof}

Finally, let us verify that the boundary integral equation (BE) in Proposition \ref{prop:equivalent_problems} is indeed Fredholm.

\begin{prop}
The operator 
$$
\gamma S_{\tau} (\Lambda_{g,q} - \Lambda_{g,0}): H^{3/2}(\Gamma) \to H^{3/2}(\Gamma)
$$
is compact.
\end{prop}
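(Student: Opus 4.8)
The plan is to realize $\gamma S_\tau(\Lambda_{g,q}-\Lambda_{g,0})$ as the composition of a compact operator with a bounded one. The point is that, while each Dirichlet-to-Neumann map is only bounded $H^{3/2}(\Gamma)\to H^{1/2}(\Gamma)$, the difference $\Lambda_{g,q}-\Lambda_{g,0}$ is \emph{smoothing}: it factors through a compact Sobolev embedding in the interior of $M$.

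First I would record the standard interior representation of the difference of DN maps. Let $P_0\colon H^{3/2}(\Gamma)\to H^2(M)$ be the Poisson operator for $-\Delta_g$ (as used in the proof of Proposition~\ref{prop:equivalent_problems}), and let $G_{M,q}\colon L^2(M)\to H^2(M)\cap H^1_0(M)$ be the solution operator for the Dirichlet problem $(-\Delta_g+q)w=F$ in $M$, $w|_\Gamma=0$; this is bounded because $0$ is not a Dirichlet eigenvalue of $-\Delta_g+q$ in $M$, the standing assumption under which $\Lambda_{g,q}$ is defined at all. Given $f\in H^{3/2}(\Gamma)$, write $u_0=P_0 f$ and let $u\in H^2(M)$ solve $(-\Delta_g+q)u=0$ in $M$ with $u|_\Gamma=f$. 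Then $w=u-u_0$ satisfies $(-\Delta_g+q)w=-q u_0$ in $M$ and $w|_\Gamma=0$, so $w=-G_{M,q}(qu_0)$ and hence
\[
(\Lambda_{g,q}-\Lambda_{g,0})f=\partial_\nu w|_\Gamma=-\partial_\nu\bigl(G_{M,q}(q u_0)\bigr)|_\Gamma .
\]
Thus $\Lambda_{g,q}-\Lambda_{g,0}$ is the composition
\[
H^{3/2}(\Gamma)\xrightarrow{\,P_0\,}H^2(M)\xrightarrow{\,\iota\,}L^2(M)\xrightarrow{\,-q\,}L^2(M)\xrightarrow{\,G_{M,q}\,}H^2(M)\xrightarrow{\,\partial_\nu|_\Gamma\,}H^{1/2}(\Gamma),
\]
where $P_0$, multiplication by $q\in L^\infty(M)$, $G_{M,q}$, and the normal trace are all bounded between the indicated spaces, while the inclusion $\iota\colon H^2(M)\hookrightarrow L^2(M)$ is compact by Rellich's theorem. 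Consequently $\Lambda_{g,q}-\Lambda_{g,0}\colon H^{3/2}(\Gamma)\to H^{1/2}(\Gamma)$ is compact. To finish, by Lemma~\ref{claim:layer_basic} applied with $s=1/2$ the trace single layer potential $\gamma S_\tau\colon H^{1/2}(\Gamma)\to H^{3/2}(\Gamma)$ is bounded; composing a compact operator with a bounded one, $\gamma S_\tau(\Lambda_{g,q}-\Lambda_{g,0})\colon H^{3/2}(\Gamma)\to H^{3/2}(\Gamma)$ is compact.

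I do not expect a genuine obstacle here. The only point requiring routine care is the bookkeeping of the mapping properties when $q$ is merely in $L^\infty(M)$: multiplication by $q$ need not preserve $H^s(M)$ for $s>0$, which is exactly why the factorization is routed through $L^2(M)$ and why the compact link is the Rellich embedding $H^2(M)\hookrightarrow L^2(M)$ rather than a fractional gain at the boundary. Alternatively one can bypass the interior factorization: the same computation with $u_0\in H^1(M)$ shows that $\Lambda_{g,q}-\Lambda_{g,0}$ is bounded $H^{1/2}(\Gamma)\to H^{1/2}(\Gamma)$, and then compactness follows from the compact inclusion $H^{3/2}(\Gamma)\hookrightarrow H^{1/2}(\Gamma)$ together with boundedness of $\gamma S_\tau\colon H^{1/2}(\Gamma)\to H^{3/2}(\Gamma)$.
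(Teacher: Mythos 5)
Your argument is correct, but it routes the compactness through a different factorization than the paper does. The paper reuses the Green-identity relation \eqref{stau_difference_ktau}: writing $u = P_q f$ with $P_q$ the Poisson operator for $-\Delta_g+q$, it obtains $\gamma S_{\tau}(\Lambda_{g,q}-\Lambda_{g,0}) = \gamma K_{\tau}\, q\, E\, J\, P_q$, where $J\colon H^2(M)\to L^2(M)$ is the natural inclusion (compact by Rellich) and $E$ is extension by zero, so compactness of the whole composite comes from the single compact link $J$. You instead isolate the standard fact that $\Lambda_{g,q}-\Lambda_{g,0}\colon H^{3/2}(\Gamma)\to H^{1/2}(\Gamma)$ is compact --- driven by the same Rellich embedding $H^2(M)\hookrightarrow L^2(M)$, but packaged inside $M$ via $P_0$ and the interior solution operator $G_{M,q}$ --- and then compose with the boundedness of $\gamma S_{\tau}\colon H^{1/2}(\Gamma)\to H^{3/2}(\Gamma)$ supplied by Lemma \ref{claim:layer_basic}. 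Both proofs rest on the identical source of compactness; yours is more modular and avoids any appeal to \eqref{stau_difference_ktau}, needing only the trace mapping property of $S_{\tau}$, while the paper's is shorter in context because that identity was already derived in the proof of Proposition \ref{prop:equivalent_problems}. Your closing alternative --- compactness of the inclusion $H^{3/2}(\Gamma)\hookrightarrow H^{1/2}(\Gamma)$ combined with boundedness of $\Lambda_{g,q}-\Lambda_{g,0}$ on $H^{1/2}(\Gamma)$ and of $\gamma S_{\tau}\colon H^{1/2}(\Gamma)\to H^{3/2}(\Gamma)$ --- is also valid and is the most elementary of the three routes. All the mapping properties you invoke ($P_0$, $G_{M,q}$, multiplication by $q\in L^{\infty}(M)$ on $L^2$, the normal trace on $H^2(M)$) are correctly bookkept, including the point that multiplication by a merely bounded $q$ must be taken on $L^2(M)$.
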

\begin{proof}
Let $f \in H^{3/2}(\Gamma)$, and let $u = P_q f$ where $P_q: H^{3/2}(\Gamma) \to H^2(M)$ is the Poisson operator mapping $h_0$ to $v_0$ where $(-\Delta_g + q)v_0 = 0$ in $M$ and $v_0|_{\Gamma} = h_0$. The exact same argument leading to \eqref{stau_difference_ktau} in the proof of Proposition \ref{prop:equivalent_problems} shows that 
$$
S_{\tau}(\Lambda_{g,q} - \Lambda_{g,0}) f = K_{\tau}(q E J u) \quad \text{in } T^{\text{int}} \smallsetminus M
$$
where $E: L^2(M) \to L^2(T)$ is extension by zero and $J: H^2(M) \to L^2(M)$ is the natural inclusion. Taking traces on $\Gamma$, we obtain the factorization 
$$
\gamma S_{\tau} (\Lambda_{g,q} - \Lambda_{g,0}) = \gamma K_{\tau} q E J P_q.
$$
The result follows since on the right hand side $J$ is compact and all other operators are bounded.
\end{proof}

\section{Proofs of the main results} \label{section_proofs}

In Sections \ref{section_boundarylayerpotentials} and \ref{section_boundaryintegralequation} we considered layer potentials and equivalent problems characterizing CGO solutions in the case where $(M,g) \subset \subset (T^{\text{int}},g)$ where $T = \mR \times M_0$, $g = e \oplus g_0$, and $(M_0,g_0)$ can be any compact $(n-1)$-dimensional manifold with boundary. Now we specialize to the case where $(M_0,g_0)$ is simple and prove Theorems \ref{thm_main1} and \ref{thm_main2}.

The first step is to fix the harmonic functions $u_0$ used in Proposition \ref{prop:equivalent_problems}. We first choose a simple manifold $(\tilde{M}_0,g_0)$ such that $(M_0,g_0) \subset \subset (\tilde{M}_0,g_0)$. Below, we will write $(r,\theta)$ for the polar normal coordinates in $(\tilde{M}_0,g_0)$ with center at a given point $p \in \tilde{M}_0 \smallsetminus M_0$ (these exist globally because the manifold is simple), and we write, following \cite[Section 5]{DKSaU}, 
\begin{equation*}
\tilde{a} = \tilde{a}(x_1,r,\theta) = e^{-i\tau r} \abs{g}^{-1/4} e^{i\lambda(x_1+ir)} b(\theta)
\end{equation*}
where $\lambda$ is a fixed nonzero real number and $b \in C^{\infty}(S^{n-2})$ is a fixed function. Note that $\tilde{a} \in C^{\infty}(T)$ since the coordinates $(r,\theta)$ are smooth in $M_0$.

\begin{prop} \label{prop:freesolutions}
Given any $\tau$ with $\abs{\tau} \geq 1$ and $\tau^2 \notin \text{Spec}(-\Delta_{g_0})$, and for any point $p \in \tilde{M}_0 \smallsetminus M_0$, for any real number $\lambda \neq 0$, and for any smooth function $b = b(\theta)$, there is a function $u_0$ with 
$$
\Delta_g u_0 = 0 \ \ \text{in } T, \qquad u_0 \in H^2_{\text{loc}}(T),
$$
of the form 
$$
u_0 = e^{-\tau x_1} \tilde{a} + e^{-\tau x_1} r_0
$$
where $r_0 = G_{\tau} f$ for some explicit function $f$ and $\norm{r_0}_{L^2(M)} = O(\abs{\tau}^{-1})$ as $\abs{\tau} \to \infty$.
\end{prop}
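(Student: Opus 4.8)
The plan is to exploit that $\tilde a$ is an approximate (WKB) solution: I would look for $u_0 = e^{-\tau x_1}(\tilde a + r_0)$ with $r_0$ a small correction, obtained from Proposition~\ref{prop:normestimate_uniqueness_2} applied with the potential equal to $0$ (so $\tau_0 = 1$ and the hypotheses $\abs{\tau}\geq 1$, $\tau^2\notin\text{Spec}(-\Delta_{g_0})$ of the present statement suffice). Since $u_0 = e^{-\tau x_1}(\tilde a + r_0)$, the equation $\Delta_g u_0 = 0$ in $T$ is equivalent to
\[
e^{\tau x_1}(-\Delta_g)e^{-\tau x_1} r_0 = f, \qquad f := -e^{\tau x_1}(-\Delta_g)e^{-\tau x_1}\tilde a,
\]
so the problem reduces to (i) computing $f$ explicitly and checking that it lies in the class of right-hand sides allowed by Proposition~\ref{prop:normestimate_uniqueness_2}, and (ii) solving this equation for $r_0$.

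For (i) I would write $e^{-\tau x_1}\tilde a = e^{-s(x_1 + ir)}\abs{g}^{-1/4}b(\theta)$ with $s = \tau - i\lambda$, so that $\tilde a$ appears as a complex geometrical optics amplitude with phase $\Phi = x_1 + ir$ and amplitude $\abs{g}^{-1/4}b(\theta)$. Using $g = e\oplus g_0$ and the fact that $(M_0,g_0)$ is simple, so that $(r,\theta)$ are global smooth coordinates and $\abs{\nabla_{g_0} r} = 1$, one has the eikonal identity $\abs{\nabla\Phi}^2 = \abs{\nabla x_1}^2 - \abs{\nabla r}^2 + 2i\,\nabla x_1\cdot\nabla r = 0$, which cancels the terms of order $s^2$ in $e^{\tau x_1}(-\Delta_g)e^{-\tau x_1}\tilde a$, and the transport identity $2\nabla\Phi\cdot\nabla(\abs{g}^{-1/4}b) + (\Delta_g\Phi)\abs{g}^{-1/4}b = 0$, which holds exactly because of the factor $\abs{g}^{-1/4}$ (recall $\Delta_g\Phi = i\Delta_{g_0} r = i\,\partial_r\log\abs{g}^{1/2}$ in these coordinates) and cancels the terms of order $s$. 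This is the computation of \cite[Section~5]{DKSaU}; after it only the zeroth-order term remains, giving
\[
f = e^{i\lambda x_1}\,e^{-(\lambda + i\tau)r}\,\Delta_{g_0}\!\big(\abs{g}^{-1/4}b(\theta)\big),
\]
which I would record as the promised explicit function. The features to extract are that $\abs{f} = \bigl|e^{-\lambda r}\Delta_{g_0}(\abs{g}^{-1/4}b)\bigr|$ is independent of $\tau$ and is a fixed bounded function, constant in $x_1$ and supported in $M_0$ in the transversal variable, so for any $\delta > 1/2$ one has $f \in L^2_{-\delta}(T)$ with $\norm{f}_{L^2_{-\delta}(T)}$ bounded independently of $\tau$; and $\mF_{x_1} f(\,\cdot\,,x')$ is a constant multiple of the Dirac mass at $\xi_1 = \lambda$, hence supported in $\{\abs{\xi_1}\geq\abs{\lambda}\}$. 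Thus $f = f_1 + f_2$ with $f_1 = 0$, $f_2 = f$ meets the hypotheses of Proposition~\ref{prop:normestimate_uniqueness_2}.

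For (ii), fix $\delta > 1/2$. Proposition~\ref{prop:normestimate_uniqueness_2} with $q = 0$ yields a unique $r_0 = G_\tau f \in H^2_{-\delta}(T)\cap H^1_{-\delta,0}(T)$ solving $e^{\tau x_1}(-\Delta_g)e^{-\tau x_1} r_0 = f$, together with the estimate (at $s=0$) $\norm{r_0}_{L^2_{-\delta}(T)} \leq C\abs{\tau}^{-1}\norm{f}_{L^2_{-\delta}(T)} = O(\abs{\tau}^{-1})$. Setting $u_0 := e^{-\tau x_1}\tilde a + e^{-\tau x_1} r_0$ gives $-\Delta_g u_0 = e^{-\tau x_1}(-f) + e^{-\tau x_1} f = 0$ in $T$, and $u_0 \in H^2_{\text{loc}}(T)$ since $e^{-\tau x_1}\tilde a \in C^\infty(T)$ and $r_0 \in H^2_{-\delta}(T) \subset H^2_{\text{loc}}(T)$. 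Finally, since $M \subset\subset T$ is bounded in the $x_1$-direction, $\langle x_1 \rangle^{-\delta}$ is bounded below by a positive constant on $M$, whence $\norm{r_0}_{L^2(M)} \leq C\norm{r_0}_{L^2_{-\delta}(T)} = O(\abs{\tau}^{-1})$, the claimed decay.

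The main obstacle is the WKB step (i): everything hinges on the choice of the phase $x_1 + ir$ — which uses in an essential way the simplicity of $(M_0,g_0)$, making $r$ a globally smooth solution of the eikonal equation — together with the amplitude $\abs{g}^{-1/4}b(\theta)$, the unique solution of the transport equation up to a function of $\theta$, so that \emph{all} terms of order $\tau^2$ and $\tau$ vanish and the remainder $f$ is bounded uniformly in $\tau$. A secondary point needing care is that $f$ does not decay in $x_1$, so $G_\tau$ cannot be used in its plain $L^2_\delta \to H^2_{-\delta}$ form; the refined version of Proposition~\ref{prop:normestimate_uniqueness_2}, which admits $f_2 \in L^2_{-\delta}$ with transversal Fourier support in $\{\abs{\xi_1}\geq\abs{\lambda}\}$, is precisely what bridges this gap.
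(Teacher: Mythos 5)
Your proposal is correct and follows essentially the same route as the paper: the same eikonal/transport cancellation from \cite[Section 5]{DKSaU} for the phase $x_1+ir$ and amplitude $\abs{g}^{-1/4}b(\theta)$, followed by an application of Proposition~\ref{prop:normestimate_uniqueness_2} with $q=0$, $f_1=0$ and $f_2=f$ (whose $x_1$-Fourier support at $\xi_1=\lambda$ is exactly why the refined statement is needed). Your explicit $f=e^{i\lambda x_1}e^{-(\lambda+i\tau)r}\Delta_{g_0}(\abs{g}^{-1/4}b)$ is the same function the paper records as $e^{i\lambda x_1}e^{-i\tau r}(\Delta_{g_0}-\lambda^2)(\abs{g}^{-1/4}e^{-\lambda r}b)$, just simplified by absorbing $e^{i\lambda\Phi}$ into a complex frequency $s=\tau-i\lambda$.
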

\begin{proof}
If $u_0$ is of the required form, then $\Delta_g u_0 = 0$ is equivalent with 
\begin{equation} \label{u0_r0_equation}
e^{\tau x_1} (-\Delta_g) e^{-\tau x_1} r_0 = f
\end{equation}
where $f = e^{\tau x_1} \Delta_g (e^{-\tau x_1} \tilde{a})$. Writing $\Phi = x_1 + ir$, we compute 
\begin{multline*}
f = -e^{-i\tau r} [e^{\tau \Phi}(-\Delta_g) e^{-\tau \Phi}](\abs{g}^{-1/4} e^{i\lambda(x_1+ir)} b(\theta)) \\
 = -e^{-i\tau r} [-\tau^2 \langle d\Phi,d\Phi \rangle + \tau (2\langle d\Phi,d\,\cdot\, \rangle + \Delta_g \Phi) - \Delta_g](\abs{g}^{-1/4} e^{i\lambda(x_1+ir)} b(\theta)).
\end{multline*}
Here we have extended $\langle \,\cdot\,,\,\cdot\, \rangle$ as a complex bilinear form to complex valued $1$-forms. As in \cite[Section 5]{DKSaU}, we see that $\langle d\Phi,d\Phi \rangle = 0$ and also that $(2\langle d\Phi,d\,\cdot\, \rangle + \Delta_g \Phi)(\abs{g}^{-1/4} e^{i\lambda(x_1+ir)} b(\theta)) = 0$ (this was the reason for the choice of $\tilde{a}$). Consequently 
\begin{align*}
f &= e^{-i\tau r} \Delta_g (e^{i\tau r} \tilde{a}) = e^{-i\tau r} (\partial_1^2 + \Delta_{g_0}) (e^{i\tau r} \tilde{a}) \\
 &= e^{i\lambda x_1} [e^{-i\tau r}(\Delta_{g_0}-\lambda^2)(\abs{g}^{-1/4} e^{-\lambda r} b(\theta))].
\end{align*}
Then for any $\delta > 1/2$ one has $f \in L^2_{-\delta}(T)$, the norm $\norm{f}_{L^2_{-\delta}(T)}$ is independent of $\tau$, and the Fourier transform $\mF_{x_1} f(\,\cdot\,,x')$ is supported in $\{\abs{\xi_1} \geq \abs{\lambda} \}$. By Proposition \ref{prop:normestimate_uniqueness_2} we have a solution $r_0 = G_{\tau} f$ of \eqref{u0_r0_equation}, which gives the required solution $u_0$.
\end{proof}

We can now prove the main theorems.

\begin{proof}[Proof of Theorem \ref{thm_main2}]
We first consider the case, as in the beginning of this section, where $(M,g)$ is an admissible manifold with conformal factor $c = 1$. Suppose that the manifold $(M,g)$, and consequently also $(M_0,g_0)$, and the map $\Lambda_{g,q}$ are known. We wish to determine $q$ from this knowledge.

First note the basic integral identity (see \cite[Lemma 6.1]{DKSaU})
\begin{equation} \label{main2proof_integral_identity}
\int_{\partial M} ((\Lambda_{g,q} - \Lambda_{g,0}) f_1) f_2 \,dS = \int_M q u_1 u_2 \,dV
\end{equation}
which is valid for any $u_j \in H^2(M)$ with $(-\Delta+q)u_1 = 0$ in $M$, $\Delta_g u_2 = 0$ in $M$, and $u_j|_{\partial M} = f_j$. We consider CGO solutions in $T$ of the form 
\begin{gather*}
u_1 = u_{0,1} + e^{-\tau x_1} r_1, \\
u_2 = u_{0,2}
\end{gather*}
where $u_{0,j}$ are harmonic functions provided by Proposition \ref{prop:freesolutions} having the form 
\begin{gather*}
u_{0,1} = e^{-\tau (x_1+ir)} \abs{g}^{-1/4} e^{i\lambda(x_1+ir)} b(\theta) + e^{-\tau x_1} G_{\tau} \psi_1, \\
u_{0,2} = e^{\tau (x_1+ir)} \abs{g}^{-1/4} e^{i\lambda(x_1+ir)} + e^{\tau x_1} G_{-\tau} \psi_2.
\end{gather*}
Here $\tau \geq \tau_0$ and $\tau^2 \notin \text{Spec}(-\Delta_{g_0})$, $(r,\theta)$ are polar normal coordinates in $(\tilde{M}_0,g_0)$ with center at $p \in \tilde{M}_0 \smallsetminus M_0$, $\lambda \neq 0$, $b$ is a smooth function in $S^{n-2}$, and $\psi_j$ are explicit functions with $\norm{G_{\pm \tau} \psi_j}_{L^2(M)} = O(\abs{\tau}^{-1})$.

The point is that $u_{0,j}$ are explicit functions which can be constructed from the knowledge of $(M,g)$, and also $f_2 = u_{0,2}|_{\Gamma}$ is known. By Proposition \ref{prop:equivalent_problems} there is a unique CGO solution $u_1$ of the above form, and the boundary value $f_1 = u_1|_{\Gamma}$ is the unique solution in $H^{3/2}(\Gamma)$ of the boundary integral equation 
$$
(\id + \gamma S_{\tau} (\Lambda_{g,q} - \Lambda_{g,0})) f_1 = u_{0,1} \quad \text{on } \Gamma.
$$
Since the operator on the left and the function on the right are known from our data, we can construct $f_1$ as the unique solution of this Fredholm integral equation. Then the left hand side of \eqref{main2proof_integral_identity} is known, and consequently we can determine from our data the integrals 
\begin{equation} \label{thm_main2_proof_orthogonality}
\int_M q u_1 u_2 \,dV
\end{equation}
for any $u_1$ and $u_2$ as above.

Since $u_1$ solves $(-\Delta_g + q)u_1 = 0$ with $r_1 \in H^1_{-\infty,0}(T)$, Proposition \ref{prop:normestimate_uniqueness_2} shows that 
$$
r_1 = - G_{\tau} (\id + q G_{\tau})^{-1} (e^{\tau x_1} q u_{0,1})
$$
and using the form of $u_{0,1}$ and norm estimates for $G_{\tau}$ gives 
$$
\norm{r_1}_{L^2(M)} = O(\abs{\tau}^{-1}).
$$
Thus, taking the limit as $\tau \to \infty$ in \eqref{thm_main2_proof_orthogonality}, we have recovered from our boundary data the quantities 
$$
\int_M q \abs{g}^{-1/2} e^{2i\lambda(x_1+ir)} b(\theta) \,dV.
$$

At this point it is convenient to extend $q$ into $T$ as a function in $C^{\infty}_c(T^{\text{int}})$. This may be done by recovering the Taylor series of $q$ on $\partial M$ via boundary determination \cite[Section 8]{DKSaU} (this procedure is constructive), and by extending $q$ to a function in $C^{\infty}_c(T^{\text{int}})$ so that $q|_{T \smallsetminus M}$ is known. Using that $dV = \abs{g}^{1/2} \,dx_1 \,dr \,d\theta$, the last integral becomes 
$$
\int_0^{\infty} \int_{S^{n-2}} e^{-2\lambda r} \left[ \int_{-\infty}^{\infty} e^{2i\lambda x_1} q(x_1,r,\theta) \,dx_1 \right] b(\theta) \,dr \,d\theta.
$$
Denoting the quantity in brackets by $f_{\lambda}(r,\theta)$ and by varying the smooth function $b$, we determine the integrals 
$$
\int_0^{\infty} e^{-2\lambda r} f_{\lambda}(r,\theta) \,dr \quad \text{for all } \theta \in S^{n-2}.
$$

These integrals are known for any nonzero real number $\lambda$ and for any point $p \in \tilde{M}_0 \smallsetminus M_0$ which is the center of the polar normal coordinates $(r,\theta)$ in $\tilde{M}_0$. Noting that $r \mapsto (r,\theta)$ is the unit speed geodesic in $(\tilde{M}_0,g_0)$ starting at $p$ in direction $\theta$, and letting $p$ approach $\partial M_0$, we can recover the integrals 
\begin{equation} \label{thm_main2_proof_attenuatedraytransform}
\int_0^T e^{-2\lambda r} f_{\lambda}(\gamma(r)) \,dr
\end{equation}
for any geodesic $\gamma: [0,T] \to M_0$ where $\gamma(0), \gamma(T) \in \partial M_0$ and $\gamma(t)$ for $0 < t < T$ lies in $M_0^{\text{int}}$. This is the attenuated geodesic ray transform of $f_{\lambda}$ in $(M_0,g_0)$, with constant attenuation $-2\lambda$. Now, assuming $\dim(M) = 3$ so $(M_0,g_0)$ is $2$-dimensional, we invoke the invertibility result for the attenuated ray transform \cite{SaU} which allows to recover the function $f_{\lambda}$ in $M_0$ from the integrals \eqref{thm_main2_proof_attenuatedraytransform} for any $\lambda$. Thus, we have determined the integrals 
$$
\int_{-\infty}^{\infty} e^{2i\lambda x_1} q(x_1,x') \,dx_1
$$
for any $\lambda \neq 0$ and for any $x' \in M_0$. This determines $q$ in $M$ by inverting the one-dimensional Fourier transform.

We have proved the theorem in the case where $(M,g)$ is an admissible manifold and with conformal factor $c = 1$. For general conformal factors, suppose that $(M,g)$ is admissible and $g = c\tilde{g}$ where $\tilde{g} = e \oplus g_0$. Define also $\tilde{q} = c(q-q_c)$ where $q_c = c^{\frac{n-2}{4}} \Delta_{c \tilde{g}}(c^{-\frac{n-2}{4}})$. The identity 
$$
c^{\frac{n+2}{4}} (-\Delta_{c\tilde{g}} + q) (c^{-\frac{n-2}{4}} u) = (-\Delta_{\tilde{g}} + \tilde{q})u
$$
implies that, since $\nu_{\tilde{g}} = c^{1/2} \nu_g$, 
\begin{equation} \label{thm_main2_proof_reduction_identity}
\Lambda_{\tilde{g},\tilde{q}} f = c^{\frac{n}{4}} \Lambda_{g,q}(c^{-\frac{n-2}{4}} f) + \frac{n-2}{4} c^{-1} (\partial_{\nu_{\tilde{g}}} c) f.
\end{equation}
Thus, from the knowledge of $\Lambda_{g,q}$ and $(M,g)$ we can determine $\Lambda_{\tilde{g},\tilde{q}}$. The proof above then shows that one can reconstruct $\tilde{q}$, from which $q$ is easily determined.
\end{proof}

\begin{proof}[Proof of Theorem \ref{thm_main1}]
Let $(M,\tilde{g})$ be admissible and known and suppose that $\Lambda_{c\tilde{g}} = \Lambda_{c\tilde{g},0}$ is known. By boundary determination \cite{DKSaU} we can determine $c|_{\partial M}$ and $\partial_{\nu_{\tilde{g}}} c|_{\partial M}$. The identity \eqref{thm_main2_proof_reduction_identity} shows that 
$$
\Lambda_{\tilde{g},\tilde{q}} f = c^{\frac{n}{4}} \Lambda_{c\tilde{g},0}(c^{-\frac{n-2}{4}} f) + \frac{n-2}{4} c^{-1} (\partial_{\nu_{\tilde{g}}} c) f
$$
with $\tilde{q} = -c^{\frac{n+2}{4}} \Delta_{c\tilde{g}}(c^{-\frac{n-2}{4}})$. This shows that $\Lambda_{c\tilde{g}}$ determines $\Lambda_{\tilde{g},\tilde{q}}$, and Theorem \ref{thm_main2} implies that we can recover $\tilde{q}$.

We write $w = \log\,c^{-\frac{n-2}{4}}$ and compute 
\begin{align*}
\Delta_{\tilde{g}} w &= \sum_{j,k=1}^n \abs{\tilde{g}}^{-1/2} \partial_j(\abs{\tilde{g}}^{1/2} \tilde{g}^{jk} c^{\frac{n-2}{4}} \partial_k (c^{-\frac{n-2}{4}})) \\
 &= \sum_{j,k=1}^n c^{n/2} \abs{c\tilde{g}}^{-1/2} \partial_j(c^{-\frac{n-2}{4}}\abs{c\tilde{g}}^{1/2} (c\tilde{g})^{jk} \partial_k (c^{-\frac{n-2}{4}})) \\
 &= c^{\frac{n+2}{4}} \Delta_{c\tilde{g}}(c^{-\frac{n-2}{4}}) + \sum_{j,k=1}^n c^{\frac{n-2}{2}} \tilde{g}^{jk} \partial_j(c^{-\frac{n-2}{4}}) \partial_k(c^{-\frac{n-2}{4}}).
\end{align*}
This implies that 
\begin{align*}
-\Delta_{\tilde{g}} w + \langle dw, dw \rangle_{\tilde{g}} &= \tilde{q} \qquad \text{in } M, \\
w|_{\partial M} &= \log\,c^{-\frac{n-2}{4}}|_{\partial M}.
\end{align*}
This nonlinear Dirichlet problem has a unique solution by the maximum principle \cite{GilbargTrudinger} and we have already recovered the right hand side $\tilde{q}$ and the boundary value $\log\,c^{-\frac{n-2}{4}}|_{\partial M}$, so we may construct $w$ in $M$ by solving the problem. This determines $c$ in $M$.
\end{proof}


\begin{thebibliography}{10}

\bibitem{ALP} K.~Astala, M.~Lassas, L.~P\"aiv\"arinta, \textit{Calder{\'o}n's inverse problem for anisotropic conductivity in the plane}, Comm. PDE \textbf{30} (2005), 207--224.
\bibitem{AP_boundary} K.~Astala, L.~P\"aiv\"arinta, \textit{A boundary integral equation for Calder{\'o}n's inverse conductivity problem}, Collect. Math. (2006), Vol. Extra, 127--139.
\bibitem{AP} K.~Astala, L.~P\"aiv\"arinta, \textit{Calder{\'o}n's inverse conductivity problem in the plane}, Ann. of Math. \textbf{163} (2006), 265--299.
\bibitem{aubin}
T.~Aubin, Some nonlinear problems in Riemannian geometry. Springer Monographs in Mathematics, Springer, 1998.
\bibitem{Bu} A.~L.~Bukhgeim, \textit{Recovering a potential from Cauchy data in the two-dimensional case}, J. Inverse Ill-posed Probl. \textbf{16} (2008), 19--34.
\bibitem{C} A.~P. Calder{\'o}n, \emph{On an inverse boundary value problem}, Seminar on
  Numerical Analysis and its Applications to Continuum Physics, Soc. Brasileira
  de Matem{\'a}tica, R{\'i}o de Janeiro, 1980.
\bibitem{DKSaU}
D.~Dos Santos~Ferreira, C.~E. Kenig, M.~Salo, G.~Uhlmann,
  \emph{{Limiting Carleman weights and anisotropic inverse problems}}, Invent. Math. \textbf{178} (2009), 119-171.
\bibitem{FSU}
B.~Frigyik, P.~Stefanov, G.~Uhlmann, \emph{{The X-ray transform for a generic family of curves and weights}}, J. Geom. Anal. \textbf{18} (2008), 89--108.
\bibitem{GilbargTrudinger} D.~Gilbarg, N.S.~Trudinger, Elliptic partial differential equations of second order. Third printing, Springer-Verlag, 2001.
\bibitem{GT} C.~Guillarmou, L.~Tzou, \textit{{Calder{\'o}n inverse problem with partial data on Riemann surfaces}}, Duke Math. J. (to appear), arXiv:0908.1417.
\bibitem{GT_magnetic} C.~Guillarmou, L.~Tzou, \textit{{Identification of a connection from Cauchy data on a Riemann surface with boundary}}, preprint (2010), arXiv:1007.0760.
\bibitem{HM} G.M.~Henkin, V.~Michel, \textit{{Inverse conductivity problem on Riemann surfaces}}, J. Geom. Anal. \textbf{18} (2008), 1033--1052.
\bibitem{HN} G.M.~Henkin, R.G.~Novikov, \textit{{The $\overline{\partial}$-equation in the multidimensional inverse scattering problem}}, Russian Math. Surveys \textbf{42} (1987), 109--180.
\bibitem{HN_2010} G.M.~Henkin, R.G.~Novikov, \textit{{On the reconstruction of conductivity of bordered two-dimensional surface in $R^3$ from electrical currents measurements on its boundary}}, J. Geom. Anal. (to appear), arXiv:1003.4897.
\bibitem{HS} G.~Henkin, M.~Santacesaria, \textit{On an inverse problem for anisotropic conductivity in the plane}, Inverse Problems \textbf{26} (2010), 095011.
\bibitem{HS2} G.~Henkin, M.~Santacesaria, \textit{{Gel'fand-Calder{\'o}n's inverse problem for anisotropic conductivities on bordered surfaces in $\mR^3$}}, preprint (2010), arXiv:1006.0647.
\bibitem{IUY} O.~Imanuvilov, G.~Uhlmann, M.~Yamamoto, \textit{{The Calder{\'o}n problem with partial data in two dimensions}}, J. Amer. Math. Soc. \textbf{23} (2010), 655--691.
\bibitem{KSaU}
C.~E.~Kenig, M.~Salo, G.~Uhlmann, \emph{{Inverse problems for the anisotropic Maxwell equations}}, Duke Math. J. (to appear), arXiv:0905.3275.
\bibitem{LeU} J.~Lee, G.~Uhlmann, \textit{Determining anisotropic real-analytic conductivities by boundary measurements}, Comm. Pure Appl. Math.,  \textbf{42} (1989), 1097--1112.
\bibitem{McLean} W.~McLean, Strongly elliptic systems and boundary integral equations. Cambridge University Press, 2000.
\bibitem{N}
A.~Nachman, \emph{Reconstructions from boundary measurements}, Ann. Math. \textbf{128} (1988), 531--576.
\bibitem{N_2d} A.~Nachman, \textit{Global uniqueness for a two-dimensional inverse boundary value problem}, Ann. of Math. \textbf{143} (1996), 71--96.
\bibitem{NS}
A.~Nachman, B.~Street, \emph{{Reconstruction in the Calder{\'o}n problem with partial data}}, Comm. PDE \textbf{35} (2010), 375--390.
\bibitem{No_1988} R.G.~Novikov, \textit{{Multidimensional inverse spectral problem for the equation $-\Delta \psi + (v(x) - E u(x))\psi = 0$}}, Funct. Anal. Appl. \textbf{22} (1988), 263--272.
\bibitem{No_constructive} R.G.~Novikov, \textit{{An effectivization of the global reconstruction in the Gel'fand-Calder{\'o}n inverse problem in three dimensions}}, Contemp. Math. \textbf{494} (2009), 161--184.
\bibitem{OPS} P.~Ola, L.~P\"aiv\"arinta, E.~Somersalo, \emph{An inverse boundary value problem in electrodynamics}, Duke Math. J. \textbf{70} (1993), 617--653.
\bibitem{Sa} M.~Salo, \textit{Semiclassical pseudodifferential calculus and the
  reconstruction of a magnetic field}, Comm. PDE \textbf{31} (2006), 1639--1666.
\bibitem{SaU} M.~Salo, G.~Uhlmann, \textit{The attenuated ray transform on simple surfaces}, preprint (2010), arXiv:1004.2323.
\bibitem{Sh} V.~Sharafutdinov, \textit{Integral geometry of tensor fields}, in \textit{Inverse and Ill-Posed Problems Series}, VSP, Utrecht, 1994.
\bibitem{SU} J.~Sylvester, G.~Uhlmann, \textit{A global uniqueness theorem for an inverse boundary value problem}, Ann. of Math. \textbf{125} (1987), 153--169.
\bibitem{taylor_toolspde} M.E.~Taylor, Tools for PDE. Mathematical Surveys and Monographs \textbf{81}, American Mathematical Society, 2000.

\end{thebibliography}

\providecommand{\bysame}{\leavevmode\hbox to3em{\hrulefill}\thinspace}
\providecommand{\href}[2]{#2}

\end{document}